\newtheorem{theorem}{Theorem}[section]
\newtheorem{lemma}[theorem]{Lemma}
\newtheorem{corollary}[theorem]{Corollary}
\newtheorem{question}[theorem]{Question}
\newtheorem{proposition}[theorem]{Proposition}
\theoremstyle{definition}
\theoremstyle{remark}
\begin{document}
\title[Strongly topologically orderable gyrogroups with a suitable set]
{Strongly topologically orderable gyrogroups with a suitable set}

\author{Jiamin He}
\address{(Jiamin He): School of mathematics and statistics, Minnan Normal University, Zhangzhou 363000, P. R. China}
\email{hjm1492539828@163.com}

\author{Jiajia Yang}
\address{(Jiajia Yang): School of mathematics and statistics, Minnan Normal University, Zhangzhou 363000, P. R. China}
\email{3561333253@qq.com}

\author{Fucai Lin*}
\address{(Fucai Lin): 1. School of mathematics and statistics, Minnan Normal University, Zhangzhou 363000, P. R. China; 2. Fujian Key Laboratory of Granular Computing and Application, Minnan Normal University, Zhangzhou 363000, P. R. China}
\email{linfucai2008@aliyun.com; linfucai@mnnu.edu.cn}

\thanks{The authors are supported by Fujian Provincial Natural Science Foundation of China (No: 2024J02022) and the NSFC (Nos. 11571158).}
\thanks{* Corresponding author}

\keywords{strongly topologically orderable gyrogroup; suitable set; totally ordered; hereditarily paracompact; locally compact.}
\subjclass[2000]{22A15, 54F05, 54H11, 54H99}

\begin{abstract}
 A  discrete subset $S$ of a topologically gyrogroup $G$ is called a {\it suitable set} for $G$ if $S\cup \{1\}$ is closed and the subgyrogroup generated by $S$ is dense in $G$, where $1$ is the identity element of $G$. In this paper, we mainly study the existence of suitable set of strongly topologically orderable gyrogroups, which extends some result in some papers in the literature. In particular, the existences of suitable set of each
 locally compact or not totally disconnected strongly topologically orderable
gyrogroup are affirmative.
\end{abstract}

\maketitle

\section{Introduction}
The concept of suitable sets for topological groups was introduced by Hofman and Morris \cite{key14} in 1990. Until now, numerous significant results on suitable sets for topological groups have been obtained by many topology scholars. However, the topic to study the existence of suitable set for topological groups is far from over. For example, M. Tkachenko in \cite{key16} posed the following open question, which is still unknown for us.

\begin{question}\cite[Problem 1.5]{key16}\label{q0}
Does every topologically oederable group contain a (closed) suitable set?
\end{question}

Indeed, M. Venkataraman, M. Rajagopalan and T. Soundararajan in \cite{key8}  proved that each not totally disconnected topologically orderable group $G$ contains an open normal subgroup which is topologically isomorphic to the additive group $\mathbb{R}$ of real numbers endowed with its usual topology; thus $G$ is metrizable. Hence we have the following theorem.

\begin{theorem}\label{t1}
Each not totally disconnected topologically orderable group has a suitable set.
\end{theorem}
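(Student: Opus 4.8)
The plan is to exploit the structural theorem of Venkataraman--Rajagopalan--Soundararajan quoted above. Given a not totally disconnected topologically orderable group $G$, fix an open normal subgroup $H$ that is topologically isomorphic to the additive group $(\mathbb{R},+)$. Since every open subgroup of a topological group is closed, $H$ is clopen; consequently each coset of $H$ is clopen and the quotient $G/H$ carries the discrete topology. I write $\pi\colon G\to G/H$ for the quotient homomorphism. The strategy is to build a suitable set by combining a small suitable set inside $H$ with one representative of each nontrivial coset.

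First I would produce a suitable set inside $H$. Fixing a topological isomorphism $H\cong(\mathbb{R},+)$, choose $a,b\in H$ whose images are rationally independent (for instance the elements corresponding to $1$ and $\sqrt 2$). Then $\{a,b\}$ is finite, hence discrete and closed in $H$, and $\langle a,b\rangle$ corresponds to $\mathbb{Z}+\sqrt 2\,\mathbb{Z}$, which is dense in $\mathbb{R}$ by Kronecker's theorem; so $S_{0}=\{a,b\}$ is a suitable set for $H$.

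Next I would account for the cosets. For each nonidentity coset $c\in (G/H)\setminus\{H\}$ I choose a representative $g_{c}\in\pi^{-1}(c)$, and set $T=\{g_{c}: c\in (G/H)\setminus\{H\}\}$ and $S=S_{0}\cup T$. Discreteness and closedness are then immediate from the clopen decomposition $G=\bigsqcup_{c}\pi^{-1}(c)$: the set $S$ meets $H$ in the finite set $S_{0}$ and meets each other coset $\pi^{-1}(c)$ in the single point $g_{c}$, so every point of $S$ is isolated by intersecting with the appropriate clopen coset. Likewise $S\cup\{1\}$ has closed trace on each clopen coset; since membership in the closure of a set can be tested locally on an open cover, $S\cup\{1\}$ is closed in $G$.

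Finally I turn to density, which is the step needing the most care. The key is the elementary lemma that, for an open subgroup $H\le G$, a subgroup $D\le G$ is dense in $G$ as soon as $D\cap H$ is dense in $H$ and $DH=G$; this is verified by translating a basic neighborhood $gV$ with $V\subseteq H$ back to the identity and applying density of $D\cap H$ inside a suitable open subset of $H$. Applying this with $D=\langle S\rangle$: we have $\langle a,b\rangle\subseteq D\cap H$ dense in $H$, while $\pi(D)\supseteq\{\pi(g_{c}):c\ne H\}=(G/H)\setminus\{H\}$ generates $G/H$, whence $\pi(D)=G/H$ and $DH=G$. Thus $D=\langle S\rangle$ is dense, and $S$ is the required suitable set. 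I expect the only genuine subtlety to be this density lemma together with the observation that closedness of $S\cup\{1\}$ is a local condition relative to the clopen coset cover; the metrizability of $G$ also furnished by the quoted theorem could instead be used to argue with sequences, but the clopen structure makes that route unnecessary.
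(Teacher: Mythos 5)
Your proof is correct, and it takes a genuinely different route from the paper's. The paper gives no construction at all: it invokes the Venkataraman--Rajagopalan--Soundararajan theorem \cite{key8} only to conclude that $G$ has an open subgroup topologically isomorphic to $\mathbb{R}$, hence is first-countable and therefore metrizable, and then implicitly appeals to the known theorem that every metrizable topological group has a suitable set --- the same two-step scheme the authors later reproduce for gyrogroups in Corollary~\ref{12}, via Theorem~\ref{11} and \cite[Theorem 1]{key13}. You instead use the structure theorem at full strength and build the suitable set by hand: two rationally independent elements of $H\cong\mathbb{R}$ form a finite suitable set for $H$, one representative per nontrivial coset handles the discrete quotient $G/H$, and your gluing lemma ($D\cap H$ dense in $H$ together with $DH=G$ forces $D$ dense in $G$) combines them. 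All the steps you flag as delicate do hold: the gluing lemma is proved exactly as you sketch (and needs only openness, not normality, of $H$); closedness of $S\cup\{1\}$ is indeed a local condition over the clopen coset cover; and $\pi(\langle S\rangle)$ is a subgroup of $G/H$ containing every non-identity element, hence equal to $G/H$, which gives $DH=G$. What your argument buys is self-containedness and extra information: it avoids the nontrivial metrizable-implies-suitable-set theorem altogether and produces a suitable set $S$ that is itself closed, not merely $S\cup\{1\}$. What the paper's route buys is brevity and transferability: metrizability plus \cite[Theorem 1]{key13} is exactly the scheme that generalizes to strongly topological gyrogroups, where nothing as precise as an open subgroup isomorphic to $\mathbb{R}$ is available, whereas your construction leans on the concrete abelian structure of $\mathbb{R}$ and would not transfer as stated.
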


 In 2017,  W. Atiponrat \cite{key1} introduced the concept of topological gyrogroups and discussed some topological properties of topological gyrogroups. Then, in 2019, M. Bao and F. Lin in \cite{key12} introduced strongly topological gyrogroups.
In this paper, we mainly discuss the existence of suitable set in the class of strongly topologically orderable gyrogroups, which gives a generalizations of Theorem~\ref{t1}.

This paper is organized as follows.

In Section 2, we introduce
necessary notations and terminology which are used in
the paper.

In Section 3, we mainly prove that every strongly topologically orderable gyrogroup is either metrizable or has a totally ordered local base $\mathcal{H}$ at the identity element, consisting of clopen $L$-gyrosubgroups, such that $gyr[x, y](H)=H$ for any $x, y\in G$ and $H\in \mathcal{H}$; moreover, we prove that every strongly topologically orderable gyrogroup is hereditarily paracompact.

In Section 4, we show that every locally compact, totally disconnected, strongly topologically orderable gyrogroup contains a suitable set.

In Section 5, we prove that if a strongly topologically orderable gyrogroup has a (closed) suitable set, then its dense subgyrogroup also has a (closed) suitable set.

\section{Preliminaries}
In this section, we introduce the
necessary notation and terminology which are used in
the paper. Denote the sets of real numbers, positive integers and all non-negative integers by $\mathbb{R}$, $\mathbb{N}$, and $\omega$, respectively. Readers may refer to
\cite{AT, Eng} for terminologies and notations not
explicitly given here.

A \emph{groupoid} (or \emph{magma}) \cite{key1} is an algebraic structure $(G,\oplus)$ which consists of a non-empty set $G$ and a binary operation $\oplus:G\times G\to G$.
Then we say that a groupoid $(G,\oplus)$ is called a \emph{gyrogroup} when its binary operation satisfies the following conditions:
	\begin{enumerate}
		\item[(1)] There is exact one identity element $1\in G$ such that $1\oplus g=g\oplus 1$ for each $g\in G$.
		
\smallskip
        \item[(2)] For each $g\in G$, there is exactly one inverse element $\ominus g\in G$ satisfying $\ominus g\oplus g=1=g\oplus(\ominus g$).
		
\smallskip
		\item[(3)] For any $g,h\in G$, there exisits a $gyr[g,h]\in Aut(G,\oplus)$ satisfying $g\oplus(h\oplus f)=(g\oplus h)\oplus gyr[g,h](f)$ for each $f\in G$.
		
\smallskip
		\item[(4)] For any $g,h\in G, gyr[g,h]=gyr[g\oplus h,h]$.
	\end{enumerate}

Then we can define the concept of subgyrogroup in a gyrogroup, see \cite{key3}. Clearly,  a nonempty subset $H$ of a gyrogroup $G$ is a subgyrogroup if and only if for any $a, b\in H$, we have $\ominus a\in H$ and $a\oplus b \in H$. A subgyrogroup $H$ of $G$ is called an \emph{$L$-subgyrogroup} \cite{key3}, if $gyr[g,h](H)=H$ for all $g\in G$ and $h\in H$.

The following theorem shows that the left cosets of each $L$-subgyrogroup of a gyrogroup $G$ forms a disjoint partition.

\begin{theorem} (\cite[Theorem 20]{key3})\label{ttttt}
	Let $G$ be a gyrogroup and $H$ an $L$-subgyrogroup of $G$. Then the family of left cosets
	\[
	\{a \oplus H:a\in G\}
	\]
	forms a disjoint partition of $G$.
\end{theorem}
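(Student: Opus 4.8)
The plan is to realize the left cosets as the blocks of an equivalence, but since we are not in a group the bookkeeping of gyrations is the whole game, and the $L$-subgyrogroup hypothesis is exactly what forces the relevant gyrations to preserve $H$. First I would record the standard consequences of axioms (1)--(4): that $1\in H$ (pick $h\in H$, so $\ominus h\in H$ and $\ominus h\oplus h=1\in H$); the left cancellation law $\ominus a\oplus(a\oplus b)=b$ (from the left gyroassociative law together with $gyr[\ominus a,a]=gyr[1,a]=\mathrm{id}$, where the first equality uses the left loop property (4)); the gyroautomorphism inversion law $gyr[a,b]^{-1}=gyr[b,a]$; and the right gyroassociative law $(a\oplus b)\oplus c=a\oplus(b\oplus gyr[b,a]c)$. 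From the inversion law and the definition of an $L$-subgyrogroup I also obtain the \emph{flipped} form $gyr[h,g](H)=H$ for every $h\in H$ and $g\in G$: indeed $gyr[g,h]$ maps $H$ bijectively onto $H$, hence so does its inverse $gyr[h,g]$.

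Covering is immediate, since $a=a\oplus 1\in a\oplus H$ for every $a\in G$, so $\bigcup_{a\in G}(a\oplus H)=G$. The crux is that any two cosets which meet must coincide, and I would isolate this in two inclusions. \emph{Key step 1} (if $c\in a\oplus H$ then $c\oplus H\subseteq a\oplus H$): write $c=a\oplus h_0$ with $h_0\in H$, and for $h\in H$ compute
\[
\ominus a\oplus(c\oplus h)=(\ominus a\oplus c)\oplus gyr[\ominus a,c](h)=h_0\oplus gyr[\ominus a,a\oplus h_0](h),
\]
using left gyroassociativity and then left cancellation. The left loop property (4) rewrites $gyr[\ominus a,a\oplus h_0]=gyr[\ominus a\oplus(a\oplus h_0),a\oplus h_0]=gyr[h_0,a\oplus h_0]$, and by the flipped $L$-property this gyration sends $h\in H$ into $H$; hence $\ominus a\oplus(c\oplus h)\in H\oplus H\subseteq H$, i.e. $c\oplus h\in a\oplus H$.

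\emph{Key step 2} (if $c\in a\oplus H$ then $a\in c\oplus H$): again with $c=a\oplus h_0$, I would exhibit $a$ as a coset element explicitly. By right gyroassociativity, $(a\oplus h_0)\oplus h'=a\oplus(h_0\oplus gyr[h_0,a]h')$, so $a=c\oplus h'$ holds precisely when $h_0\oplus gyr[h_0,a]h'=1$, that is $h'=gyr[h_0,a]^{-1}(\ominus h_0)=gyr[a,h_0](\ominus h_0)$; since $a\in G$ and $h_0\in H$, the $L$-property gives $gyr[a,h_0](\ominus h_0)\in H$, so $h'\in H$ and $a\in c\oplus H$. Combining the two steps, if $c\in a\oplus H$ then Key step 1 gives $c\oplus H\subseteq a\oplus H$, while Key step 2 yields $a\in c\oplus H$ and hence, by Key step 1 with the roles of $a$ and $c$ interchanged, $a\oplus H\subseteq c\oplus H$; thus $a\oplus H=c\oplus H$. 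Consequently, if $z\in(a\oplus H)\cap(b\oplus H)$ then $a\oplus H=z\oplus H=b\oplus H$, so distinct cosets are disjoint and the family is a partition.

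I expect the main obstacle to be the gyration accounting in Key step 1: the automorphism that appears is $gyr[\ominus a,a\oplus h_0]$, whose arguments do not immediately fit the definition of an $L$-subgyrogroup, and only after the loop-property reduction to $gyr[h_0,a\oplus h_0]$ together with the inversion law does the $L$-hypothesis apply. This is also precisely the point where a merely closed subgyrogroup would fail, so keeping careful track of which gyrations preserve $H$ is the heart of the argument.
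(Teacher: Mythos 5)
Your proof is correct, but note that the paper itself offers no proof of this statement: it is quoted directly from Suksumran and Wiboonton \cite{key3}, so the only available comparison is with that cited source. There, the partition is obtained by introducing the relation $a\sim b$ if and only if $\ominus a\oplus b\in H$ and checking that it is an equivalence relation whose classes are exactly the left cosets; the symmetry and transitivity checks need precisely the ingredients you isolate, namely the left loop property to trade $gyr[\ominus a,b]$ for $gyr[\ominus a\oplus b,b]$ (whose first argument lies in $H$ when $a\sim b$), the inversion law $gyr[a,b]^{-1}=gyr[b,a]$, and the $L$-hypothesis to ensure the relevant gyrations fix $H$ setwise. Your version does the same bookkeeping but organizes it as a direct coset argument: Key step 1 ($c\in a\oplus H$ implies $c\oplus H\subseteq a\oplus H$) and Key step 2 ($c\in a\oplus H$ implies $a\in c\oplus H$) together show that any two cosets that meet must coincide, which avoids naming the equivalence relation at the cost of proving two inclusions by hand. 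The mathematical content is essentially the same either way, and your individual steps all check out: the reduction $gyr[\ominus a, a\oplus h_0]=gyr[h_0,a\oplus h_0]$ via the loop property and left cancellation, the flipped $L$-property $gyr[h,g](H)=H$ obtained from the inversion law, and the explicit witness $h'=gyr[a,h_0](\ominus h_0)\in H$ in Key step 2 are all valid. Your closing remark correctly identifies where mere closure of $H$ under $\oplus$ and $\ominus$ would not suffice, which is exactly the point the $L$-condition is designed to handle.
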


Let $\mathcal{T}$ be a topology on a gyrogroup $G$. We say that $G$ is a \emph {topological gyrogroup} \cite{key1}  if the binary operation $\oplus \colon G \times G \to G$ and the inversion operation $\ominus \colon G \to G$ are continuous, where $G \times G$ is endowed with the product topology.
In particular, a topological gyrogroup $G$ is called a \emph{strongly topological gyrogroup} \cite{key10} if it admits a neighborhood base $\mathcal{U}$ of the identity element $1$ such that $gyr[x,y](U)=U$ for any $x,y\in G$ and $U\in \mathcal{U}$. For convenience, we say that $G$ is a {\it strongly topological gyrogroup with neighborhood base $\mathcal{U}$ of 1}.

A (strongly) topological gyrogroup $(G,\mathcal{T})$ is called to be ({\it strongly}) {\it topologically orderable gyrogroup} if there is a total order $\le$ on $G$ such that the order topology \cite{key4} induced by $\le$ coincides with the topology $\mathcal{T}$.

Finally, we recall that a subset $S$ of a topologically orderable gyrogroup $G$ is a \emph{suitable set} \cite{key12} for $G$ if $S$ is discrete, $S\cup \{1\}$ is closed in $G$ and the subgyrogroup generated by $S$ is dense in $G$.

\bigskip
\section{Some properties of strongly topologically orderable gyrogroups}
In this section, we mainly prove that every strongly topologically orderable gyrogroup is either metrizable or has a totally ordered local base at the identity element; moreover, we prove that every strongly topologically orderable gyrogroup is hereditarily paracompact. Indeed, the proofs of some results (such as, Lemma~\ref{1}, Lemma~\ref{2}, Theorem~\ref{5}-Theorem~\ref{6}) in this section are similar to the proofs in \cite{key4}, where the main differences in proofs lie in the use of operations of gyrogroups and the applying of $L$-subgyrogroups. However, we give out the proofs for a convenience of the readers.

First, we give some technical lemmas. The concepts of  {\it isolated from above}, {\it isolated from below}, {\it cofinality $\alpha$ from above}  and {\it cofinality $\alpha$ from below} can be seen in \cite{key4}.

\begin{lemma}\label{1}	
  Let $G$ be a topologically orderable gyrogroup. If the identity element $1$ of $G$ is neither isolated from above nor isolated from below, then its cofinality from above is equal to its cofinality from below.
\end{lemma}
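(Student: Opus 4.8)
The plan is to exploit the two natural self-maps of $G$ that are automatically homeomorphisms for the order topology: the inversion $\ominus\colon G\to G$, which fixes $1$ and is an involution ($\ominus\circ\ominus=\mathrm{id}$), and the left translations $L_g\colon x\mapsto g\oplus x$, which are homeomorphisms (with inverse $L_{\ominus g}$ by the left cancellation law) and make $G$ topologically homogeneous. First I would record that, since $1$ is a two-sided limit, its neighbourhood filter has a base of convex open intervals $(a,b)\ni1$, so the character of $1$ equals $\max\{\lambda_-,\lambda_+\}$, where $\lambda_-,\lambda_+$ denote the cofinalities of $1$ from below and from above; recall also that each of $\lambda_\pm$, being a cofinality, is a regular cardinal, and that both are infinite because $1$ is isolated from neither side. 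A clean preliminary reduction: if the character of $1$ is countable, then $G$ is first countable at $1$, so $\lambda_-,\lambda_+\le\omega$, whence $\lambda_-=\lambda_+=\omega$; thus one may assume the character is uncountable. The goal is to show $\lambda_-=\lambda_+$.

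The core step uses inversion. Fix a strictly increasing net $(a_\xi)_{\xi<\lambda_-}$ with $a_\xi<1$ cofinal in $(\leftarrow,1)$, so $a_\xi\to1$ from below. By continuity of $\ominus$ we get $\ominus a_\xi\to1$, and $\ominus a_\xi\neq1$ since $a_\xi\neq1$. Split the index set as $J_-=\{\xi:\ominus a_\xi<1\}$ and $J_+=\{\xi:\ominus a_\xi>1\}$; as $\lambda_-$ is a limit cardinal, at least one of $J_-,J_+$ is cofinal in $\lambda_-$. If $J_+$ is cofinal, then $\{\ominus a_\xi:\xi\in J_+\}$ is a subset of $(1,\rightarrow)$ of cardinality at most $\lambda_-$ converging to $1$ from above, hence coinitial there, giving $\lambda_+\le\lambda_-$. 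Running the symmetric argument on a coinitial-from-above net of size $\lambda_+$ yields $\lambda_-\le\lambda_+$ whenever inversion carries a cofinal part of it below $1$. Thus, whenever inversion \emph{crosses sides} cofinally in either direction, the two cofinalities are comparable both ways and the lemma follows.

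The main obstacle is the \emph{side-preserving} alternative, in which a tail of the below-net returns below $1$ and a tail of the above-net returns above $1$; equivalently, there is a neighbourhood $(c,d)\ni1$ on which $\ominus$ maps $(c,1)$ into $(\leftarrow,1)$ and $(1,d)$ into $(1,\rightarrow)$. Here inversion alone yields no comparison — indeed on a group such as $(\mathbb{Z}/2)^{\omega}$ the inversion is the identity — so the plan is to feed this case back into the crossing case by analysing the involution-induced bijection between the ``crossing'' sets $A^{+}=\{x<1:\ominus x>1\}$ and $B^{-}=\{x>1:\ominus x<1\}$: if $A^{+}$ is cofinal in $(\leftarrow,1)$ or $B^{-}$ is coinitial in $(1,\rightarrow)$, then continuity of $\ominus$ reproduces a crossing family of size at most $\lambda_-$ (resp. $\lambda_+$) and the previous step applies. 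The genuinely delicate residual sub-case is when $\ominus$ fixes each side throughout a neighbourhood of $1$ while the character is uncountable; there I would invoke homogeneity, transporting the one-sided structure at $1$ to neighbouring points via the translations $L_g$ to force $\lambda_-=\lambda_+$. Pinning down this last sub-case is the step I expect to require the most care, since it is precisely where the order and the gyrogroup operation interact only loosely.
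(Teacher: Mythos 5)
Your ``crossing'' case is correct as far as it goes: if inversion sends a cofinal part of the below-net across to the other side of $1$, you do get a coinitial subset of $(1,\rightarrow)$ of size at most $\lambda_-$, hence $\lambda_+\le\lambda_-$, and symmetrically. But the proposal has a genuine gap exactly where you flag it: the side-preserving residual case is not a fringe sub-case, it is the entire content of the lemma. As your own example shows, inversion can be the identity (any Boolean-type gyrogroup), so in general inversion yields nothing, and everything is thrown onto the final step, for which you offer only the intention to ``invoke homogeneity, transporting the one-sided structure at $1$ to neighbouring points via the translations $L_g$.'' That is not an argument, and it faces a real obstruction: in a topologically orderable gyrogroup the order is only required to induce the topology, not to be compatible with $\oplus$, so the translations $L_g$ are homeomorphisms but need not be monotone; moreover, the two ``sides'' of a point are not a topological invariant of a linearly ordered space (a homeomorphism fixing a point can interleave the two sides), so one-sided cofinalities cannot simply be transported along homeomorphisms. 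Homogeneity by itself only preserves the character at each point; what the lemma needs is an interaction between the multiplication and the order, and your proposal never actually brings the continuity of $\oplus$ into play.

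For comparison, the paper's proof makes the multiplication do the work in one stroke, with no case distinction on inversion. Suppose the cofinality from below strictly exceeds $\kappa$, the cofinality from above. Choose interval neighbourhoods $U_\tau=(c_\tau,d_\tau)$ of $1$, for $\tau<\kappa$, with $\ominus U_{\tau+1}\oplus U_{\tau+1}\subset U_\tau$ and $d_\tau\to 1$. Since only $\kappa$ many left endpoints $c_\tau$ occur and the cofinality from below is larger, they are not cofinal in $(\leftarrow,1)$, so $U=\bigcap_{\tau<\kappa}U_\tau$ contains an interval $[a,1]$ with $a\ne 1$. Pick $h\in(a,1)$. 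If $h\oplus d_\tau\in U_{\tau+1}$ for some $\tau$, then left cancellation gives $d_\tau=\ominus h\oplus(h\oplus d_\tau)\in\ominus U_{\tau+1}\oplus U_{\tau+1}\subset U_\tau$, contradicting $d_\tau\notin U_\tau$; hence $h\oplus d_\tau\notin[a,1]$ for every $\tau$. But $d_\tau\to 1$ and $\oplus$ is continuous, so $h\oplus d_\tau\to h\in(a,1)$, a contradiction. This translation-plus-left-cancellation argument (run under the assumption that the two cofinalities differ) is precisely what must replace your appeal to homogeneity; once you have it, the inversion analysis becomes unnecessary.
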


\begin{proof}
 Let $\kappa$ be the cofinality of $e$ from above, and suppose the cofinality from below of $1$ is strictly greater. Let $\{U_\tau:\tau<\kappa\}$ be the family of neighborhoods of $1$ satisfying the following conditions:
	\begin{enumerate}
		\item[(1)] For each $\tau<\kappa$, $U_\tau=(c_{\tau}, d_{\tau})$ for some $c_{\tau}, d_{\tau}\in G$;

\smallskip
		\item[(2)] $\ominus U_{\tau+1}\oplus U_{\tau+1}\subset U_\tau$ for any $\tau<\kappa$;

\smallskip
		\item[(3)] $\lim\limits d_{\tau} = 1$.
	\end{enumerate}
Let $U=\bigcap_{\tau<\kappa}U_{\tau}$. Then $U$ contains an interval $[a, 1]$ where $a{\ne} 1$. Take any $h\in(a, 1)$. Since $d_{\tau}{\notin}U_{\tau}$, it follows that $h{\oplus}d_{\tau}{\notin}U_{\tau+1}$. Otherwise, by applying the left cancellation, we have $$d_{\tau}={\ominus}h\oplus (h{\oplus}d_{\tau})\in{\ominus}h\oplus U_{\tau+1}\subset\ominus U_{\tau+1}\oplus U_{\tau+1}\subset U_{\tau}, $$which leads to a contradiction. Therefore, $h\oplus d_{\tau}\notin[a, 1]$ for all $\tau$. Because $(a, 1)$ is an open neighborhood of $h$, we conclude that $h$ is not in the closure of $\{h\oplus d_{\tau}:\tau<\kappa\}$, which is a contradiction with condition (3).
\end{proof}

\begin{lemma}\label{2}
	Let $G$ be a topologically orderable gyrogroup. Then there exists a totally ordered neighborhood base at the identity element $1$ of $G$.
\end{lemma}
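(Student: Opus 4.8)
The plan is to build the desired base entirely out of order intervals, using Lemma~\ref{1} to synchronize the two sides of $1$. First I would dispose of the degenerate cases. If $G$ is a single point, or more generally if $1$ is isolated, then $\{1\}$ is open and $\{\{1\}\}$ is a one-element (hence trivially totally ordered) neighborhood base. Note that left translations $x\mapsto a\oplus x$ are homeomorphisms, so $G$ is topologically homogeneous; consequently, when $|G|>1$ the identity $1$ is neither the maximum nor the minimum, and "isolated from above" genuinely means that $1$ has an immediate successor $s$. In that subcase (with $1$ not isolated from below) I would fix a strictly increasing family $\{c_\tau:\tau<\lambda\}$ cofinal in the left ray from below, where $\lambda$ is the cofinality of $1$ from below, and take the intervals $\{(c_\tau,s):\tau<\lambda\}$; since $(c_\tau,s)=(c_\tau,1]$ is an open neighborhood of $1$, these are nested as $c_\tau$ increases and refine every basic neighborhood, so they form a totally ordered base. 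The case where $1$ is isolated from below is symmetric.

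The core is the case where $1$ is neither isolated from above nor from below. Here Lemma~\ref{1} applies and tells me that the cofinality $\kappa$ of $1$ from above equals its cofinality from below. Using this common (regular) cardinal $\kappa$, I would choose a strictly decreasing family $\{d_\tau:\tau<\kappa\}$ in the right ray converging to $1$ from above and a strictly increasing family $\{c_\tau:\tau<\kappa\}$ in the left ray converging to $1$ from below, both cofinal by the definition of cofinality. I then set $U_\tau=(c_\tau,d_\tau)$ for each $\tau<\kappa$ and claim that $\mathcal{B}=\{U_\tau:\tau<\kappa\}$ is the required totally ordered neighborhood base at $1$.

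Verifying the two properties is then routine. For the chain condition, if $\tau<\tau'<\kappa$ then monotonicity gives $c_\tau<c_{\tau'}<1<d_{\tau'}<d_\tau$, whence $U_{\tau'}\subseteq U_\tau$, so $\mathcal{B}$ is totally ordered by inclusion. For the base property, an arbitrary neighborhood of $1$ contains a basic interval $(a,b)$ with $a<1<b$; cofinality from below yields a $\tau_1<\kappa$ with $a<c_{\tau_1}$, and cofinality from above yields a $\tau_2<\kappa$ with $d_{\tau_2}<b$. Because both families are indexed by the single ordinal $\kappa$, I may set $\tau=\max\{\tau_1,\tau_2\}$ and obtain $U_\tau=(c_\tau,d_\tau)\subseteq(a,b)$.

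The main obstacle is precisely the step that upgrades a base to a chain: if the two sides of $1$ had different cofinalities, the increasing $c_\tau$'s and the decreasing $d_\tau$'s could not be indexed by one and the same ordinal, and the intervals $(c_\tau,d_\tau)$ would in general fail to nest, leaving only an unordered base. Lemma~\ref{1} is exactly what removes this difficulty, since it legitimizes the common index $\tau=\max\{\tau_1,\tau_2\}$ used above. I expect the gyrogroup operation to play no further role here beyond what is already packaged into Lemma~\ref{1}; the remainder of the argument is purely order-topological.
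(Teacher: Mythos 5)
Your proposal is correct and follows essentially the same route as the paper: dispose of the cases where $1$ is isolated from above or below (which the paper calls obvious), and in the remaining case invoke Lemma~\ref{1} to index cofinal families on both sides of $1$ by a single cardinal $\kappa$, so that the intervals $(c_\tau,d_\tau)$ nest into a totally ordered base. Your write-up simply makes explicit the details the paper compresses into ``follows immediately from Lemma~\ref{1}.''
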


\begin{proof}
	If $1$ is isolated from above or below, the conclusion is obvious. For the case where $1$ is neither isolated from above nor below, the result follows immediately from Lemma~\ref{1}.
\end{proof}

\begin{lemma}\label{3}
Let  $G$ be a strongly topologically orderable gyrogroup.
Then there exists a totally ordered local base $\mathcal{V}$ for the symmetric neighborhoods of the identity element $1$ of $G$ such that $gyr[x,y](W)=W$ for each $x,y\in G$ and $W\in \mathcal{V}$.
\end{lemma}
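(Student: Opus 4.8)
The plan is to start from the totally ordered neighborhood base at $1$ furnished by Lemma~\ref{2} and to replace each of its members by its \emph{largest} symmetric, gyration-invariant open subset. Concretely, let $\mathcal{B}$ be a totally ordered (by inclusion) neighborhood base at $1$, and for each $B\in\mathcal{B}$ set $W_B=\bigcup\{O: O\subseteq B,\ O\ \text{open and symmetric},\ gyr[x,y](O)=O\ \text{for all}\ x,y\in G\}$. I would then claim that $\mathcal{V}=\{W_B:B\in\mathcal{B}\}$ is the required base and verify four properties separately: each $W_B$ is an open symmetric gyration-invariant neighborhood of $1$, the family is totally ordered, and it is a local base at $1$.

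First I would check that each $W_B$ is a genuine neighborhood of $1$, which is exactly where the \emph{strongly} topological hypothesis enters. Since $G$ is a strongly topological gyrogroup with neighborhood base $\mathcal{U}$ of $1$, I can pick $U\in\mathcal{U}$ with $U\subseteq B$ and $gyr[x,y](U)=U$ for all $x,y$, and then symmetrize by passing to $U\cap\ominus U$. This set is open (inversion is a homeomorphism), contains $1$, is symmetric, and remains gyration-invariant: because each $gyr[x,y]$ is an automorphism of $(G,\oplus)$ by condition (3), it commutes with inversion, so $gyr[x,y](\ominus U)=\ominus\, gyr[x,y](U)=\ominus U$ and hence $gyr[x,y](U\cap\ominus U)=U\cap\ominus U$. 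Thus $U\cap\ominus U$ is one of the sets whose union defines $W_B$, giving $1\in U\cap\ominus U\subseteq W_B$. Openness, symmetry, and gyration-invariance of $W_B$ itself then follow because each property is preserved under arbitrary unions: $gyr[x,y](W_B)=\bigcup gyr[x,y](O)=W_B$, and likewise $\ominus W_B=\bigcup\ominus O=W_B$.

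Next I would establish that $\mathcal{V}$ is totally ordered and is a local base. The assignment $B\mapsto W_B$ is monotone: if $B\subseteq B'$, then every open symmetric gyration-invariant subset of $B$ is also such a subset of $B'$, so $W_B\subseteq W_{B'}$. Since $\mathcal{B}$ is totally ordered by inclusion, monotonicity transports this total order to $\{W_B:B\in\mathcal{B}\}$, so $\mathcal{V}$ is totally ordered. Finally, $W_B\subseteq B$ for every $B$, so for any neighborhood $N$ of $1$ I may choose $B\in\mathcal{B}$ with $B\subseteq N$ and obtain $W_B\subseteq B\subseteq N$; hence $\mathcal{V}$ is a local base at $1$.

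The conceptual difficulty is reconciling the three requirements --- total order, symmetry, and gyration-invariance --- simultaneously, since the gyration-invariant neighborhoods supplied by $\mathcal{U}$ need not be nested and the members of a totally ordered base need not be symmetric or gyration-invariant. The device that resolves this is taking the maximal symmetric gyration-invariant open subset of each $B$: maximality makes the assignment monotone, so total order is inherited for free, while the closure of symmetry and gyration-invariance under unions makes each $W_B$ automatically symmetric and gyration-invariant. The only step genuinely requiring the gyrogroup structure, rather than a purely order-theoretic argument, is the symmetrization, where I use that gyrations are automorphisms and therefore commute with $\ominus$.
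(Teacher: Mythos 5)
Your proof is correct, but it takes a genuinely different route from the paper's. The paper combines the same two ingredients --- the totally ordered base of Lemma~\ref{2} and a symmetric, gyration-invariant base coming from the strong topological hypothesis --- by a transfinite interleaving argument: after normalizing so that both bases are indexed by a regular cardinal $\tau$, it recursively selects subfamilies satisfying $U_{\alpha_{\sigma+1}}\subseteq W_{\alpha_{\sigma}}\subseteq U_{\alpha_{\sigma}}$ at successor stages (with a separate clause at limit ordinals), so that the chosen $W$'s inherit the nesting of the $U$'s. You instead replace each member $B$ of the totally ordered base by its largest open symmetric gyration-invariant subset $W_B$; since this kernel operation is monotone ($B\subseteq B'$ implies $W_B\subseteq W_{B'}$), the total order transfers to $\{W_B\}$ with no recursion, no matching of cardinalities, no regularity assumption, and no successor/limit case analysis. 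What your approach buys is precisely the elimination of that set-theoretic bookkeeping (including the paper's somewhat delicate limit-stage condition), at the cost of the extra observation that symmetry and gyration-invariance are preserved under arbitrary unions, plus the algebraic fact that gyrations, being automorphisms, commute with $\ominus$, which makes your symmetrization $U\cap\ominus U$ legitimate. The one point you should make explicit: the definition of a strongly topological gyrogroup gives a neighborhood base $\mathcal{U}$ whose members need not be open, and your argument needs $U\cap\ominus U$ to be open. This is harmless --- replace $U$ by its interior, which is still gyration-invariant because each $gyr[x,y]$ is a homeomorphism of $G$ --- but as written it is an unstated assumption.
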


\begin{proof}
By Lemma~\ref{2}, suppose $\mathcal{U}=\{U_{\alpha}:\alpha<\tau\}$ is a totally ordered neighborhood base at $1$ of $G$. Moreover, since $G$ is a strongly topological gyrogroup, there exists a symmetric neighborhood base $\mathcal{W}=\{W_{\beta}:\beta<\tau\}$ at $1$ such that $gyr[x, y](W)=W$ for each $x,y\in G$ and $W\in \mathcal{W}$. Obviously, we may assume that $\tau$ is a regular cardinal. Then since $\tau$ is a regular cardinal,
there exist a subfamily $\{U_{\alpha_{\sigma}}:\sigma<\tau\}$ of $\mathcal{U}$
and a subfamily $\{W_{\alpha_{\sigma}}:\sigma<\tau\}$ of $\mathcal{W}$ such that:
\begin{enumerate}
	\item[(1)] If $\sigma$ is a successor ordinal, then ${U_{\alpha_{\sigma}}\subseteq W_{\alpha_{\beta}}\subseteq U_{\alpha_{\beta}}}$, where $\sigma=\beta+1$;
    \item[(2)] If $\sigma$ is a limit ordinal, then $U_{\alpha_{\sigma}}\subseteq \bigcup_{\delta<\sigma}W_{\alpha_{\delta}}$.
\end{enumerate}

   Thus, $\mathcal{V}=\{W_{\alpha_{\sigma}}:\sigma<\tau\}$ is a totally ordered neighborhood base at $1$ and satisfies $gyr[x,y](V)=V$ for each $x,y\in G$ and $V\in \mathcal{V}$.
\end{proof}

Now we can prove one of main theorems in this section.

\begin{theorem}\label{4}
Let $G$ be a strongly topologically orderable gyrogroup. Then
\begin{enumerate}
	\item[(1)] $G$ is metrizable or

\smallskip
	\item[(2)] $G$ has a totally ordered local base at the identity element consisting of clopen $L$-subgyrogroup $\mathcal{H}$ such that $gyr[x, y](H)=H$ for any $x, y\in G$ and each $H\in\mathcal{H}$.
\end{enumerate}
\end{theorem}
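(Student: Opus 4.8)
The plan is to run a dichotomy on the character of the identity, which by Lemma~\ref{1} coincides with its two-sided cofinality. First I would invoke Lemma~\ref{3} to fix a totally ordered local base $\mathcal{V}=\{W_\alpha:\alpha<\tau\}$ of symmetric neighborhoods of $1$ satisfying $gyr[x,y](W_\alpha)=W_\alpha$ for all $x,y\in G$, where $\tau$ is a regular cardinal equal to the character of $1$. If $\tau=\omega$, then $G$ is first countable, and a first-countable (strongly) topological gyrogroup is metrizable by the Birkhoff--Kakutani-type metrization theorem (see \cite{key12}); this is alternative (1). The whole content therefore lies in the case $\tau>\omega$, in which I would manufacture the clopen $L$-subgyrogroups demanded by alternative (2).

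For each $\alpha<\tau$ let $H_\alpha$ be the subgyrogroup of $G$ generated by $W_\alpha$. The key simplification is that the gyrations disappear: since every $gyr[x,y]$ is an automorphism fixing each $W_\alpha$ setwise, the gyroassociative law lets me reassociate arbitrary products of elements of $W_\alpha$, so that $H_\alpha=\bigcup_{n<\omega}W_\alpha^{(n)}$ (the union of the $n$-fold sums) and $H_\alpha$ is itself invariant under every $gyr[x,y]$; in particular it is an $L$-subgyrogroup. As a union of open sets $H_\alpha$ is open, and I would then upgrade openness to clopenness: left translations are homeomorphisms of $G$, so by Theorem~\ref{ttttt} the left cosets of the $L$-subgyrogroup $H_\alpha$ partition $G$ into open sets, whence $G\setminus H_\alpha$ is open and $H_\alpha$ is clopen. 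Moreover $W_\beta\subseteq W_\alpha$ forces $H_\beta\subseteq H_\alpha$, so the family $\{H_\alpha:\alpha<\tau\}$ is totally ordered by inclusion and each $H_\alpha$ inherits $gyr[x,y](H_\alpha)=H_\alpha$.

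It remains to verify that $\{H_\alpha\}$ is a local base at $1$, that is, that every $W_\gamma$ contains some $H_\alpha$; equivalently, that for each $\gamma$ there is $\alpha$ with $\bigcup_{n<\omega}W_\alpha^{(n)}\subseteq W_\gamma$. This non-escape statement is the main obstacle, and it is precisely where uncountable regular cofinality is indispensable. I would argue by contradiction: if no such $\alpha$ existed, then for cofinally many $\alpha$ some finite sum $W_\alpha^{(n_\alpha)}$ would reach outside $W_\gamma=(c_\gamma,d_\gamma)$, and---mimicking the escape construction in the proof of Lemma~\ref{1}, and using the left-cancellation and gyroassociative bookkeeping to keep the (invariant) gyrations under control---I would extract a strictly monotone $\omega$-sequence in $G$ converging to $1$ from one side. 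Since $\tau>\omega$, the point $1$ is neither isolated from above nor from below, so Lemma~\ref{1} applies and the cofinality of $1$ from that side equals $\tau>\omega$; hence no such countable sequence can exist, a contradiction. This is the gyrogroup analogue of the classical fact that a non-metrizable orderable group is non-archimedean, and it is the step I expect to require the most care, because the order estimates must be pushed through the nontrivial (though setwise invariant) gyrations.

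Finally, with the non-escape property in hand, I would set $\mathcal{H}=\{H_\alpha:\alpha<\tau\}$ and conclude that $\mathcal{H}$ is a totally ordered local base at $1$ consisting of clopen $L$-subgyrogroups with $gyr[x,y](H)=H$ for all $x,y\in G$ and all $H\in\mathcal{H}$, which is exactly alternative (2).
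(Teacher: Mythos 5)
Your skeleton matches the paper's: Lemma~\ref{3} plus a dichotomy on the regular cardinal $\tau$, metrizability via first countability when $\tau=\omega$ (the paper cites \cite[Theorem 2.3]{key5} for this), and clopen gyration-invariant subgyrogroups when $\tau>\omega$. Your construction of $H_\alpha=\langle W_\alpha\rangle$ is also fine as far as it goes: gyration-invariance of $W_\alpha$ makes finite sums bracketing-independent, $H_\alpha$ is open as a union of open sets, and clopenness follows from the coset partition of Theorem~\ref{ttttt}. The genuine gap is exactly at the step you flagged: showing that every $W_\gamma$ contains some $\langle W_\alpha\rangle$. Your proposed proof by contradiction --- ``extract a strictly monotone $\omega$-sequence converging to $1$'' from the escaping finite sums --- is an assertion, not an argument, and it is doubtful it can be completed in that form. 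The escaping sums $s_\alpha\in W_\alpha^{(n_\alpha)}\setminus W_\gamma$ lie far from $1$, not near it; the increments do lie in the shrinking sets $W_\alpha$, but a net of increments tending to $1$ is no contradiction; and since the total order on $G$ need not be compatible with translations, there is no evident way to convert ``small increments crossing an endpoint of $W_\gamma=(c_\gamma,d_\gamma)$'' into a countable monotone sequence with limit $1$. Note also that Lemma~\ref{1} runs in the opposite direction: it starts from an already convergent sequence $d_\tau\to 1$ and transports it by a fixed left translation, so ``mimicking'' it does not produce the sequence you need.

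The clean proof of your non-escape claim is, in fact, the paper's own construction, which bypasses generated subgyrogroups entirely. Given $\gamma$, choose (using the totally ordered base) a countable chain with $W_{\alpha_{n+1}}\oplus W_{\alpha_{n+1}}\subseteq W_{\alpha_n}\subseteq W_\gamma$; this is Fact 1 in the paper. The intersection $V=\bigcap_{n\in\omega}W_{\alpha_n}$ is a symmetric subset closed under $\oplus$ and $\ominus$, hence a subgyrogroup, and it is contained in $W_\gamma$ by construction. Here is where $\tau>\omega$ regular and the total ordering of the base are actually used: the countably many indices $\alpha_n$ are not cofinal in $\tau$, so some $W_\beta\subseteq V$, which makes $V$ an \emph{open} subgyrogroup and in particular gives $\langle W_\beta\rangle\subseteq V\subseteq W_\gamma$ --- your non-escape property. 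The paper then simply takes these intersections $V$ themselves as the members of $\mathcal{H}$ (smallness is built in, openness comes from regularity, clopenness and the $L$-subgyrogroup property follow from \cite[Proposition 2.11]{key6} and \cite[Proposition 6]{key3}). So the repair is to replace your contradiction sketch by this intersection argument, at which point your generated subgyrogroups become superfluous and your proof collapses into the paper's.
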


\begin{proof}
 By Lemma~\ref{3}, there is a totally ordered base $\mathcal{U}=\{U_{\alpha}: \alpha<\tau\}$ consisting of the symmetric neighborhoods of the identity element $1$ of $G$ such that $\forall\ x, y\in G$ and $\forall\alpha<\tau$, $gyr[x, y](U_{\alpha})=U_{\alpha}$. Obviously, we may assume that $\tau$ is a regular cardinal.
     If $\tau<\omega_{1}$, then $G$ is first-countable, hence it follows from \cite[Theorem 2.3]{key5} that $G$ is metrizable. Now assume that $\tau\geq\omega_{1}$. Then it is obvious that we have the following fact.

 \smallskip
 {\bf Fact 1}: For each open neighborhood $U_{\alpha}$ of the identity element, there exists a countable subfamily $\{U_{\alpha_{n}}: n\in\omega\}$ of $\mathcal{U}$ such that $U_{\alpha_{n+1}}\oplus U_{\alpha_{n+1}}\subset U_{\alpha_{n}}\subset U_{\alpha}$ and $\bigcap_{n\in\omega}U_{\alpha_{n}}$ is open in $G$.

By Fact 1, \cite[Proposition 2.11]{key6} and \cite[Proposition 6]{key3}, there exists a local base $\mathcal{H}=\{H_{\alpha}: \alpha<\tau\}$ at the identity element such that the following conditions hold:

 \smallskip
 (i) $H_{\beta}\subset H_{\alpha}$ for any $\alpha<\beta<\tau$;

 \smallskip
 (ii) $H_{\alpha}$ is an open and closed subgyrogroup for each $\alpha<\tau$;

 \smallskip
 (iii) $gyr[x, y](H_{\alpha})=H_{\alpha}$ for any $x, y\in G$ and each $\alpha<\tau$.

 \smallskip
Therefore, it follows from (i)-(iii) that $\mathcal{H}=\{H_{\alpha}: \alpha<\tau\}$ is a totally ordered local base at $1$ consisting of clopen $L$-subgyrogroup.
\end{proof}

The following theorem gives a characterization of a non-metrizable strongly topological gyrogroup which is topologically orderable.

\begin{theorem}\label{5}
Let $G$ be a strongly topological gyrogroup which is not metrizable. Then the following statements are equivalent:
\begin{enumerate}
	\item[(1)] $G$ is topologically orderable.

\smallskip
	\item[(2)] There exists a totally ordered local base at the identity element $1$ of $G$.
	
\smallskip\item[(3)] There exists a totally ordered local base at the identity element $1$ of $G$ consisting of clopen $L$-subgyrogroups.
	
\smallskip\item[(4)]There exists a base $\mathcal{B}$ for $\mathcal{T}$ such that $\mathcal{B}=\bigcup\mathcal{V}$, where $\mathcal{V}=\{\mathcal{V_{\tau}}:\tau<\alpha\}$ is a family of partitions of $G$ into clopen sets, such that $\mathcal{V_{\tau}}$ refines $\mathcal{V_{\sigma}}$ for any $\tau>\sigma$.
\end{enumerate}
\end{theorem}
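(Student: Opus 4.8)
The plan is to close the cycle of implications $(1)\Rightarrow(2)\Rightarrow(3)\Rightarrow(4)\Rightarrow(1)$, leaning on the structural results already proved for the three easy arrows and isolating the construction of a compatible total order as the only genuinely new work. The implication $(1)\Rightarrow(2)$ is immediate from Lemma~\ref{2}, since a topologically orderable gyrogroup carries a totally ordered neighborhood base at $1$. For $(2)\Rightarrow(3)$ I would feed the totally ordered local base supplied by $(2)$ into the machinery of Lemma~\ref{3} and Theorem~\ref{4}: because $G$ is a strongly topological gyrogroup, combining this totally ordered base with the $gyr$-invariant symmetric base produces, exactly as in the proof of Lemma~\ref{3}, a totally ordered local base $\mathcal{V}$ of symmetric neighborhoods satisfying $gyr[x,y](V)=V$ for all $x,y\in G$ and $V\in\mathcal{V}$. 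Since $G$ is assumed non-metrizable, the regular cardinal indexing $\mathcal{V}$ is at least $\omega_1$, and I would then run the non-metrizable branch of the proof of Theorem~\ref{4} verbatim (via Fact~1 together with \cite[Proposition 2.11]{key6} and \cite[Proposition 6]{key3}) to refine $\mathcal{V}$ into a totally ordered local base of clopen $L$-subgyrogroups, which is precisely $(3)$.

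For $(3)\Rightarrow(4)$, let $\{H_\alpha:\alpha<\tau\}$ be the totally ordered local base of clopen $L$-subgyrogroups, so that $H_\beta\subseteq H_\alpha$ whenever $\alpha<\beta$. Setting $\mathcal{V}_\alpha=\{a\oplus H_\alpha:a\in G\}$, Theorem~\ref{ttttt} guarantees that each $\mathcal{V}_\alpha$ is a partition of $G$, and since $H_\alpha$ is clopen and left translations are homeomorphisms, it is a partition into clopen sets. The containments $H_\beta\subseteq H_\alpha$ force $\mathcal{V}_\beta$ to refine $\mathcal{V}_\alpha$ whenever $\beta>\alpha$, and because $\{H_\alpha\}$ is a local base at $1$ the family $\mathcal{B}=\bigcup_{\alpha<\tau}\mathcal{V}_\alpha$ is a base for $\mathcal{T}$; after the obvious relabeling of indices this is exactly the situation described in $(4)$.

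The hard part is $(4)\Rightarrow(1)$, where I must manufacture a total order whose order topology equals $\mathcal{T}$. Writing $\{\mathcal{V}_\tau:\tau<\alpha\}$ for the refining family of clopen partitions, I would first well-order the cells of each $\mathcal{V}_\tau$ and arrange these orderings to be \emph{coherent} with refinement: for $\sigma<\tau$ and any $\mathcal{V}_\sigma$-cell $C$, the $\mathcal{V}_\tau$-cells contained in $C$ form an interval in the order on $\mathcal{V}_\tau$, and cells sitting inside an earlier $\mathcal{V}_\sigma$-cell precede those inside a later one. Since $\mathcal{B}$ is a base, distinct points $x,y\in G$ are separated at some level, so there is a least index $\tau_{x,y}$ for which $x$ and $y$ lie in different cells of $\mathcal{V}_{\tau_{x,y}}$; I then declare $x<y$ precisely when the cell of $x$ precedes the cell of $y$ in the chosen order on $\mathcal{V}_{\tau_{x,y}}$. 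The main obstacle is to verify, using coherence, that this lexicographically defined relation is antisymmetric, total, and transitive, and then the more delicate point that its order topology coincides with $\mathcal{T}$: the crux is that every cell of every $\mathcal{V}_\tau$ is order-convex, so that the clopen members of $\mathcal{B}$ are order-intervals and hence a base of order-open sets, while conversely each order-ray is a union of such cells. Managing the transfinite choices that keep the level orderings coherent, and checking convexity (and the absence of spurious endpoints) at limit levels, is where the real care is required.
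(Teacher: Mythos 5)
Your three structural arrows are fine and essentially coincide with the paper's: $(1)\Rightarrow(2)$ from Lemma~\ref{2}, $(2)\Rightarrow(3)$ by feeding the totally ordered base into Lemma~\ref{3} and the non-metrizable branch of Theorem~\ref{4}, and $(3)\Rightarrow(4)$ via the coset partitions and Theorem~\ref{ttttt}. The genuine gap is your closing arrow $(4)\Rightarrow(1)$. You define a lexicographic order from arbitrary ``coherent'' well-orderings of the cells and then concede that antisymmetry, transitivity, convexity and ``the absence of spurious endpoints'' remain to be checked; but this is not a deferred verification, it is the entire content of the theorem, and for the orderings you allow it is simply false. Concretely: suppose at some level a cell $C$ immediately precedes a cell $C'$ in your cell-ordering, $C$ acquires no maximum (its sub-cells at deeper levels keep splitting with no last point surviving) while $C'$ acquires a minimum $m$ (with well-orderings of sub-cells nothing prevents the chain of first sub-cells from having nonempty intersection). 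Then every order-interval around $m$ meets $C$, so $m$ is a limit point of $C$ in the order topology, yet $C'$ is a clopen neighborhood of $m$ disjoint from $C$ in $\mathcal{T}$; the two topologies disagree. Similarly, if infinitely many singleton cells (isolated points) inside one parent cell are given a dense or otherwise ill-chosen ordering, the order topology manufactures limit points that do not exist in $\mathcal{T}$. Nothing in ``coherence with refinement'' excludes these configurations, so the choices of the cell-orderings must be made with care that your sketch does not supply.

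This is precisely why the paper does \emph{not} route the hard direction through (4): it proves $(3)\Rightarrow(1)$, where the gyrogroup structure is available. After normalizing the chain of clopen $L$-subgyrogroups so that $H_\beta=\bigcap_{\alpha<\beta}H_\alpha$ at limit ordinals and $H_{\beta+1}$ has \emph{infinite index} in $H_\beta$, one orders, at every level, the cosets of $H_{\beta+1}$ inside each coset of $H_\beta$ in the ``integral'' ($\mathbb{Z}$-like) fashion of Nyikos--Reichel \cite[Theorem 6]{key4}. The infinite index makes this uniform choice possible, and it guarantees recursively that no coset ever has a first or last element, so all adjacencies between cosets are gaps and the pathology above cannot occur; the limit-ordinal condition controls the coset tree at limit levels. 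Your route discards exactly this homogeneity (in a bare refining family of clopen partitions, cells may split into two pieces, finitely many, or stop splitting altogether), so completing $(4)\Rightarrow(1)$ as stated would amount to proving the substantially harder general theorem that any space with such a base is orderable. Either prove or properly cite that general result, or redirect the hard arrow to $(3)\Rightarrow(1)$ and use the integral ordering, as the paper does.
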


\begin{proof}
Clearly, we have (1) $\Rightarrow$ (2) and (2) $\Rightarrow$ (3) by Theorem~\ref{4} and Lemma~\ref{3} respectively. Moreover, it is obvious that (4) $\Rightarrow$ (2). Now it suffice to prove that (3) $\Rightarrow$ (4) and (3) $\Rightarrow$ (1).

(3) $\Rightarrow$ (4). Let $\{H_{\alpha}: \alpha<\tau\}$ be a totally ordered local base at the identity element $1$ of $G$ consisting of clopen $L$-subgyrogroups. Since the left translation is a homeomorphism and each $H_{\alpha}$ is an $L$-subgyrogroup, it follows from Theorem~\ref{ttttt} that each family of the left cosets of $H_{\alpha}$ in $G$ form a well ordered
family of partitions of $G$ into clopen sets which together constitute
a base for $G$.

(3) $\Rightarrow$ (1). Let $\{H_{\alpha}: \alpha<\tau\}$ be a totally ordered local base at the identity element $1$ of $G$ consisting of clopen $L$-subgyrogroups. Here we may assume that $H_{\beta}=\bigcap_{\alpha<\beta}H_{\alpha}$ whenever $\beta$ is a limit ordinal, and that $H_{\beta+1}$ is of infinite index in $H_{\beta}$ for each $\beta<\tau$. Then we can define a linear order $\leq$ on $G$ by a complete similar method by \cite[Theorem 6]{key4} such that $\leq$ is compatible with the topology, where such ordering was called {\it integral} in \cite{key4}.
\end{proof}

The following theorem shows that each strongly topologically orderable gyrogroup is hereditarily paracompact.
		
\begin{theorem}\label{7}
	Let $(G, \mathcal{T})$ be a strongly topologically orderable gyrogroup, then $G$ is hereditarily paracompact.
\end{theorem}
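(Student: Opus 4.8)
The plan is to exploit the dichotomy furnished by Theorem~\ref{4}. The metrizable alternative is disposed of immediately: every metrizable space is paracompact, and since metrizability is inherited by subspaces, a metrizable $G$ is automatically hereditarily paracompact. Hence the real content lies in the second alternative, where $G$ is non-metrizable and carries a totally ordered local base at $1$ consisting of clopen $L$-subgyrogroups. In that situation $G$ is strongly topological and topologically orderable, so condition (1) of Theorem~\ref{5} holds, and therefore so does condition (4): there is a base $\mathcal{B}=\bigcup_{\tau<\alpha}\mathcal{V}_{\tau}$ for $\mathcal{T}$, where each $\mathcal{V}_{\tau}$ is a partition of $G$ into clopen sets and $\mathcal{V}_{\tau}$ refines $\mathcal{V}_{\sigma}$ whenever $\tau>\sigma$.

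The structural fact I would first record is the \emph{tree property} of $\mathcal{B}$: any two members of $\mathcal{B}$ are either disjoint or comparable by inclusion. Indeed, if $P\in\mathcal{V}_{\sigma}$ and $Q\in\mathcal{V}_{\tau}$ with $\sigma\le\tau$ have nonempty intersection, then since $\mathcal{V}_{\tau}$ refines $\mathcal{V}_{\sigma}$ the cell $Q$ is contained in a unique member of $\mathcal{V}_{\sigma}$, which must be the one meeting it, namely $P$; thus $Q\subseteq P$. For a point $x\in G$ and each $\tau<\alpha$, write $V_{\tau}(x)$ for the unique cell of $\mathcal{V}_{\tau}$ containing $x$; the family $\{V_{\tau}(x):\tau<\alpha\}$ is a neighbourhood base at $x$, decreasing as $\tau$ increases.

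Granted this, I would show that $G$ is in fact ultraparacompact, i.e. every open cover has a pairwise disjoint open refinement, which certainly yields paracompactness. Let $\mathcal{U}$ be an open cover of $G$. For each $x\in G$ the set of ordinals $\tau$ with $V_{\tau}(x)\subseteq U$ for some $U\in\mathcal{U}$ is nonempty (because the $V_{\tau}(x)$ form a neighbourhood base), so by well-ordering of $\alpha$ it has a least element $\tau(x)$; put $B_{x}:=V_{\tau(x)}(x)$. The crux is to verify that distinct $B_{x}$ are disjoint. Suppose $B_{x}\cap B_{y}\neq\emptyset$ with $\tau(x)\le\tau(y)$. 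By the tree property $B_{y}\subseteq B_{x}$, so $y\in B_{x}=V_{\tau(x)}(x)=V_{\tau(x)}(y)$, and since $B_{x}\subseteq U$ for some $U\in\mathcal{U}$ this forces $\tau(y)\le\tau(x)$; hence $\tau(x)=\tau(y)$ and $B_{x}=B_{y}$. Consequently $\{B_{x}:x\in G\}$ is a pairwise disjoint family of clopen sets covering $G$ and refining $\mathcal{U}$, proving paracompactness.

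It remains to upgrade this to a hereditary statement. For an arbitrary subspace $Y\subseteq G$, the traces $\{V\cap Y:V\in\mathcal{V}_{\tau}\}$, with empty sets discarded, are again partitions of $Y$ into relatively clopen sets refining one another, so their union is a base for $Y$ enjoying the same tree property; rerunning the disjoint-refinement argument inside $Y$ shows $Y$ is paracompact. Therefore $G$ is hereditarily paracompact in this case too, and the theorem follows. I expect the main obstacle to be the disjoint-refinement step, specifically the verification that the least-rank cells $B_{x}$ are pairwise disjoint; by contrast, the reduction through Theorem~\ref{4} and the passage to subspaces via the trace base are routine.
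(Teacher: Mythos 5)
Your proof is correct, and its skeleton coincides with the paper's: both dispose of the metrizable case trivially and, in the non-metrizable case, use Theorem~\ref{4} together with condition (4) of Theorem~\ref{5} to obtain a base $\mathcal{B}=\bigcup_{\tau<\alpha}\mathcal{V}_{\tau}$ consisting of refining clopen partitions, whose members are pairwise either disjoint or nested. The difference lies entirely in the last step. The paper stops at this ``tree property'' and cites \cite[Theorem 4]{key7} (Nyikos--Reichel), which asserts that a space possessing such a non-Archimedean base is hereditarily paracompact. You instead prove that implication from scratch: the least-rank selection $B_{x}=V_{\tau(x)}(x)$, the minimality argument forcing $\tau(x)=\tau(y)$ and hence $B_{x}=B_{y}$ whenever two selected cells meet, and the passage to an arbitrary subspace via trace partitions are all verified correctly (in particular, the traces of the $\mathcal{V}_{\tau}$ on a subspace $Y$ do inherit the partition and refinement structure, so the argument genuinely relativizes). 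What your route buys is self-containedness --- no appeal to the external reference --- and a nominally stronger conclusion, namely hereditary \emph{ultra}paracompactness (every open cover of every subspace admits a pairwise disjoint clopen refinement); what the paper's route buys is brevity. Either is a legitimate proof of the theorem.
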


\begin{proof}
	If $G$ is metrizable, then it is obvious that $G$ is hereditarily paracompact. Otherwise, by (4) of Theorem~\ref{5}, there exists a family $\mathcal{V}=\{\mathcal{V_{\tau}}:\tau<\alpha\}$ of partitions of $G$ into clopen sets such that the following conditions hold:

\smallskip
(1) $\mathcal{B}=\bigcup\mathcal{V}$ is a base for $\mathcal{T}$.

 \smallskip
(2) For any $\tau>\sigma$, $\mathcal{V_{\tau}}$ refines $\mathcal{V_{\sigma}}$.

Then take any $U, V\in \mathcal{B}$. If $U\cap V\neq\emptyset$, then we have $U\subset V$ or $V\subset U$, hence it follows from \cite[Theorem 4]{key7} that $G$ is hereditarily paracompact.
\end{proof}

Finally, the following theorem gives a characterization of a totally disconnected strongly topological gyrogroup which is topologically orderable.

\begin{theorem}\label{6}
	Let $G$ be a strongly topological gyrogroup. Then the following statements are equivalent:
	\begin{enumerate}
		\item[(1)] $G$ is topologically orderable and totally disconnected.
		
\smallskip\item[(2)] $G$ is topologically orderable and dim $G=0$.
		
\smallskip\item[(3)] There exists a base $\mathcal{B}$ for $\mathcal{T}$ which is union of a well-ordered family $\mathcal{V}=\{\mathcal{V_{\tau}}:\tau<\alpha\}$ of partitions of $G$ into clopen sets such that $\mathcal{V_{\tau}}$ refines $\mathcal{V_{\sigma}}$ for any $\tau>\sigma$.
	\end{enumerate}
\end{theorem}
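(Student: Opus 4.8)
The plan is to prove the cycle $(1)\Rightarrow(2)\Rightarrow(3)\Rightarrow(1)$, using the property ``$G$ has a base of clopen sets'' as the common bridge and exploiting throughout that a topologically orderable $G$ is a linearly ordered topological space which, by Theorem~\ref{7}, is hereditarily paracompact. For $(1)\Rightarrow(2)$, I would first show that a topologically orderable, totally disconnected $G$ has a base of clopen sets. This is the step where the order genuinely enters: in a linearly ordered space the connected components are convex, so total disconnectedness forces each component to be a singleton, and one then manufactures clopen convex neighbourhoods (intervals whose endpoints are gaps or jumps of the order) around every point, exactly as in the ordered-space arguments of \cite{key4}. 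Thus $\operatorname{ind}G=0$, and since $G$ is paracompact by Theorem~\ref{7}, the standard fact that a paracompact space with a clopen base is strongly zero-dimensional gives $\dim G=0$.

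For $(2)\Rightarrow(3)$ I would start again from a base of clopen sets (provided by $\dim G=0$) and split on metrizability. If $G$ is non-metrizable, then since it is topologically orderable, implication $(1)\Rightarrow(4)$ of Theorem~\ref{5} already produces a base that is the union of a well-ordered family of clopen partitions refining one another, which is precisely $(3)$. If $G$ is metrizable, then $\dim G=0$ together with the usual dimension theory of metric spaces yields a countable decreasing sequence of clopen partitions, each refining its predecessor and with meshes tending to $0$, whose union is a base; indexing this sequence by $\omega$ gives $(3)$ with $\alpha=\omega$.

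For $(3)\Rightarrow(1)$, the base $\mathcal{B}$ consists of clopen sets, so for each point the intersection of its clopen neighbourhoods is the point itself, whence $G$ is totally disconnected. To recover orderability I would feed the well-ordered refining family $\mathcal{V}$ of clopen partitions into the order construction of \cite{key4} (the ``integral'' ordering used in the proof of $(3)\Rightarrow(1)$ of Theorem~\ref{5}); after passing, if necessary, to a cofinal subfamily in which each cell is split into infinitely many pieces at the next level, this produces a linear order on $G$ whose order topology coincides with $\mathcal{T}$, so $G$ is topologically orderable and $(1)$ holds.

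The main obstacle I anticipate is the clopen-base step inside $(1)\Rightarrow(2)$, namely that a totally disconnected linearly ordered topological space is zero-dimensional. Metrizability alone will not suffice here, since Erd\H{o}s space is totally disconnected and metrizable yet one-dimensional, so the argument must genuinely use the convexity of components in an ordered space. A secondary technical point is verifying that the combinatorial order construction of \cite{key4} applies to the abstract clopen partitions of $(3)$ rather than to the cosets of $L$-subgyrogroups used in Theorem~\ref{5}, which is what forces the refinement of $\mathcal{V}$ in the last step.
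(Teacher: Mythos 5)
Your proposal has two genuine gaps, one in each direction of the cycle. The fatal one is in $(1)\Rightarrow(2)$: the ``standard fact'' you invoke --- that a paracompact space with a base of clopen sets is strongly zero-dimensional --- is false. Roy's space $\Delta$ is a complete metric (hence hereditarily paracompact) space with $\mathrm{ind}\,\Delta=0$ (it has a clopen base) but $\dim\Delta=1$; so paracompactness plus a clopen base buys you nothing beyond $\mathrm{ind}=0$. Passing from $\mathrm{ind}=0$ to $\dim=0$ here genuinely needs more than convex clopen neighbourhoods: the paper never argues this way. In the metrizable case it quotes Herrlich's theorem for orderable totally disconnected metric spaces, and in the non-metrizable case it gets $\dim G=0$ from the much stronger base of condition (3) (a rank-one base: any two members are disjoint or comparable), for which every open cover admits a pairwise disjoint basic refinement --- take maximal basic sets inside members of the cover; maximality is available because a strictly increasing chain of basic cells must come from strictly decreasing partition indices and hence is finite. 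So your $(1)\Rightarrow(2)$ must either route through (3) first (which is how the paper orders the implications, via Theorem~\ref{5}) or through a dimension theorem specific to ordered spaces; as written it rests on a false statement.

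The second gap is the orderability step in $(3)\Rightarrow(1)$. You propose to pass to a cofinal subfamily of $\mathcal{V}$ in which every cell splits into infinitely many cells at the next level and then run the ``integral'' ordering of \cite{key4}. But such a subfamily need not exist: a compact clopen cell can only be partitioned into finitely many clopen pieces, so whenever $G$ has a compact open subgyrogroup (the Cantor-set situation of Lemma~\ref{17}, e.g.\ $G=\mathbb{Z}_2^{\omega}$) no amount of merging levels produces infinite splitting. The infinite-index hypothesis is exactly what the gyrogroup structure supplies in the non-metrizable case: the paper first converts the abstract partitions into coset partitions of a strictly decreasing $\omega_1$-chain of clopen $L$-subgyrogroups (Theorems~\ref{4} and \ref{5}), where indices necessarily become infinite at limit stages, and only then applies the integral ordering; in the metrizable case it bypasses the construction entirely by citing Herrlich (a metric space of covering dimension zero is orderable, via a non-Archimedean metric). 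Two smaller remarks: your $(2)\Rightarrow(3)$ is essentially the paper's argument and is fine; and your derivation of total disconnectedness from (3) (quasi-components are singletons since the clopen base separates points) is correct and in fact simpler than the paper's appeal to Theorem~\ref{11}.
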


\begin{proof}
We divide the proof into the following two cases .

\smallskip
{\bf Case 1} $G$ is not metrizable.

By Theorem~\ref{5}, $G$ is topologically orderable, hence it is hereditarily paracompact. Then it suffice to prove that (3) implies (1) and (2). Indeed, it is obvious that dim $G=0$ since zero-dimension is equivalent to dim $G=0$ in a paracompact space. Moreover, by Theorem~\ref{11} in Section 4, if $G$ is not totally disconnected, then it is metrizble, which is contradiction.

\smallskip
{\bf Case 2} $G$ is metrizable.

The implication of (1) $\Rightarrow$ (2) was proved in \cite{HH1965}. It suffice to prove (2) $\Rightarrow$ (3) and (3) $\Rightarrow$ (1).

(2) $\Rightarrow$ (3). By \cite{HH1965}, there exists a non-Archimedean metric $d$ on $G$ inducing the topology of $G$, where the metric $d$ satisfies that, for any $x, z\in G$, we have $d(x, z)\leq \max\{d(x, y), d(y, z)\}$ for all $y$. Let $\mathcal{V}_{n}$ be the partition of $G$ into the balls of the form $B(y, \frac{1}{n})=\{x: d(x, y)<\frac{1}{n}\}$. For any fixed $n\in\mathbb{N}$, these balls are disjoint, hence (3) holds.

(3) $\Rightarrow$ (1). By \cite{HH1974} or \cite[Theorem 1]{key4}, it follows that $G$ has covering dimension zero, then from \cite{HH1965} it follows that each metric space of covering dimension zero is orderable and totally disconnected.
\end{proof}

\smallskip
\section{suitable sets in not totally disconnected or locally compact strongly topologically orderable gyrogroups}
In this section, we mainly prove that every  not totally disconnected or locally compact strongly topologically orderable gyrogroup has a suitable set. First, we give some technical lemmas.

\begin{lemma}\label{8}
		Let $G$ be a topological gyrogroup and $H$ be a $L$-subgyrogroup of $G$. Consider the set $G/H$ and give it the quotient topology for the map $P:G\to G/H$ defined by $P(x)=x\oplus H$. Then $P$ is an open map.
	\end{lemma}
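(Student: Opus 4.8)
The plan is to show that for every open set $U\subseteq G$, the image $P(U)$ is open in the quotient topology on $G/H$. By the definition of the quotient topology, $P(U)$ is open precisely when its preimage $P^{-1}(P(U))$ is open in $G$. So the entire argument reduces to identifying $P^{-1}(P(U))$ explicitly as an open subset of $G$.

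First I would unwind the definitions. A point $x\in G$ lies in $P^{-1}(P(U))$ exactly when $P(x)=P(u)$ for some $u\in U$, i.e. when $x\oplus H = u\oplus H$ for some $u\in U$. Since $H$ is an $L$-subgyrogroup, Theorem~\ref{ttttt} tells us that the left cosets $\{a\oplus H : a\in G\}$ form a disjoint partition of $G$; thus two cosets are equal iff they intersect, and the condition $x\oplus H = u\oplus H$ is equivalent to $x\in u\oplus H$ for some $u\in U$. Hence I would establish the identity
\[
P^{-1}(P(U))=\bigcup_{u\in U}(u\oplus H)=\bigcup_{h\in H}(U\oplus h),
\]
where the second equality rewrites the union over cosets as a union of right-translated copies of $U$ indexed by the elements of $H$. (The interchange of the roles of $U$ and $H$ here needs a small check using the gyrogroup axioms, but it is exactly the statement that $\{u\oplus h : u\in U,\ h\in H\}$ is the saturation of $U$.)

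Next I would invoke continuity of the group operation. For each fixed $h\in H$, the right translation $R_h\colon x\mapsto x\oplus h$ is a homeomorphism of $G$ (its continuous inverse is right translation by $\ominus h$, using the left cancellation and the gyroassociative law), so $U\oplus h = R_h(U)$ is open. Therefore $P^{-1}(P(U))$ is a union of open sets, hence open, and consequently $P(U)$ is open in $G/H$. Since $U$ was an arbitrary open set, $P$ carries open sets to open sets, which is exactly the assertion that $P$ is an open map.

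The one delicate point — the main obstacle — is justifying the coset bookkeeping in a gyrogroup rather than a group: ordinary groups let one commute translations and rearrange freely, whereas here one must keep track of the gyration automorphisms $gyr[x,y]$ when manipulating expressions such as $x\oplus h$ versus $u\oplus h'$. The cleanest route is to avoid element-chasing altogether and lean on Theorem~\ref{ttttt}: the saturation $P^{-1}(P(U))$ is simply the union of all left cosets that meet $U$, namely $\bigcup_{u\in U}(u\oplus H)$, and this description does not require reindexing. I would then observe that $u\oplus H$ is the image of the open set $u\oplus G\cap\ldots$ — more directly, that the map $y\mapsto u\oplus y$ is a homeomorphism sending the (not necessarily open) set $H$ around, so that the full saturation is most easily seen to be open by writing it as $\bigcup_{h\in H} R_h(U)$ and checking each $R_h$ is a homeomorphism, which is where the gyrogroup axioms do their work.
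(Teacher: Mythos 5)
Your overall route is the same as the paper's: the paper's entire proof is the observation that $P^{-1}(P(U))=U\oplus H$, which is open because $G$ is a topological gyrogroup, and your appeal to Theorem~\ref{ttttt} together with the decomposition $P^{-1}(P(U))=\bigcup_{h\in H}(U\oplus h)$ simply makes explicit what the paper leaves implicit. (Incidentally, the interchange of unions that worries you needs no gyrogroup axioms at all: both $\bigcup_{u\in U}(u\oplus H)$ and $\bigcup_{h\in H}(U\oplus h)$ are literally the set $\{u\oplus h:u\in U,\ h\in H\}$.)

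There is, however, a genuine error at the one place you yourself identify as the crux. In a gyrogroup the inverse of the right translation $R_h\colon x\mapsto x\oplus h$ is \emph{not} right translation by $\ominus h$, and left cancellation is irrelevant here (it is what makes \emph{left} translations homeomorphisms). Indeed, by axiom (3),
\[
x=x\oplus\bigl(h\oplus(\ominus h)\bigr)=(x\oplus h)\oplus gyr[x,h](\ominus h),
\]
so the element that undoes right translation by $h$ at the point $x\oplus h$ is $gyr[x,h](\ominus h)$, not $\ominus h$; these coincide only when the automorphism $gyr[x,h]$ fixes $h$, which is false in general gyrogroups. Thus $R_{\ominus h}\circ R_h\neq\mathrm{id}$ in general, and your parenthetical justification breaks down. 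The fact you need is nevertheless true, for a different reason: since $gyr[x,h]=gyr[x\oplus h,h]$ by axiom (4), the displayed identity shows that the inverse of $R_h$ is the map
\[
y\longmapsto y\oplus gyr[y,h](\ominus h)=y\oplus\bigl(\ominus(y\oplus h)\oplus y\bigr),
\]
which is continuous because $\oplus$ and $\ominus$ are. Hence $R_h$ is indeed a homeomorphism (this is proved in \cite{key1}), each $U\oplus h=R_h(U)$ is open, and with this one repair your argument is complete and coincides with the paper's.
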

\begin{proof}
Suppose $U$ be an open subset of $G$. Since $P^{-1}(P(U))=U\oplus H$ and $G$ is a topological gyrogroup, it follows that $P(U)$ is open in $G/H$. Thus $P$ is an open map.
\end{proof}

\begin{proposition}\label{9}
	Let $G$ be a topologically orderable gyrogroup and $H$ be a connected $L$-subgyrogroup with at least two distinct elements. Then $H$ must be an open $L$-subgyrogroup of $G$.
\end{proposition}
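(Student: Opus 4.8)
The plan is to reduce the statement to the claim that $H$ is a neighbourhood of the identity $1$, and then to exploit the fact that the order on $G$ is only a topological datum by converting it, via left translations, into information that the algebra does control. The main point to keep in mind throughout is that the total order is \emph{not} assumed compatible with $\oplus$, so every passage from order to algebra must be routed through the translation homeomorphisms rather than through the order itself.

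First I would record the reduction. Suppose we have already found an open set $U$ with $1\in U\subseteq H$. For each $h\in H$ the left translation $x\mapsto h\oplus x$ is a homeomorphism of $G$ (its inverse is $x\mapsto \ominus h\oplus x$ by the left cancellation law), so $h\oplus U$ is open; moreover $h\oplus U\subseteq H$ because $H$ is closed under $\oplus$, and $h=h\oplus 1\in h\oplus U$. Hence $H=\bigcup_{h\in H}(h\oplus U)$ is open, and since $H$ is an $L$-subgyrogroup by hypothesis this already yields the conclusion. Thus it suffices to produce an open interval around $1$ inside $H$.

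Next I would analyse the order structure of $H$. Being a connected subspace of a linearly ordered topological space, $H$ is order-convex: if $x<z<y$ with $x,y\in H$ but $z\notin H$, then $H\cap(-\infty,z)$ and $H\cap(z,+\infty)$ would disconnect $H$. Convexity guarantees that the subspace topology on $H$ coincides with the order topology of the induced order, so $H$ is itself a connected LOTS with at least two points; in particular $H$ is densely ordered, since a jump $a<b$ in $H$ would again split $H$ into $\{x\in H:x<b\}$ and $\{x\in H:x>a\}$. It therefore suffices to show that $1$ is neither the least nor the greatest element of $H$: then there are $c,d\in H$ with $c<1<d$, and by convexity $(c,d)\subseteq H$ is the desired neighbourhood of $1$. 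The crux is the order-topological fact I would use as the bridge between topology and algebra: every self-homeomorphism of a connected LOTS with more than one point is monotone, because the betweenness relation ``$b$ separates $a$ from $c$ in $H\setminus\{b\}$'' is purely topological and coincides with the order betweenness. For each $h\in H$ the map $x\mapsto h\oplus x$ restricts to a self-homeomorphism of $H$ (one checks $h\oplus H=H$ from $H$ being a subgyrogroup), hence is monotone. Now suppose $1=\min H$, and pick distinct $h_1,h_2\in H$ with $h_i>1$ (possible by dense order). An order-preserving $x\mapsto h_i\oplus x$ would send $\min H=1$ to $\min H$, yet it sends $1$ to $h_i>1$; so each map is order-reversing and sends $\min H$ to $\max H$, forcing $h_1=\max H=h_2$, a contradiction. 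Thus $1\neq\min H$, and the symmetric argument gives $1\neq\max H$.

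I expect the genuinely delicate point to be the monotonicity lemma for connected LOTS: one must verify that removing a non-endpoint of a connected LOTS leaves exactly two components (which relies on a convex subset of a connected LOTS having no internal gaps) and that a betweenness-preserving bijection of a linear order is monotone. Everything else is bookkeeping, provided one consistently translates statements about $\min$ and $\max$ back to algebra only through the homeomorphisms $x\mapsto h\oplus x$.
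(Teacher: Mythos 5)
Your proof is correct, but it follows a genuinely different route from the paper's. The paper argues through the coset decomposition: it forms $G/H$, invokes Lemma~\ref{8} to get that the natural projection $P\colon G\to G/H$ is open, observes that every coset $x\oplus H$ is connected with at least two points and hence contains a nonempty open interval (the same convexity-plus-density facts you establish), and then concludes that $P$ maps such an interval to an open singleton of $G/H$, so the quotient is discrete and every coset --- in particular $H$ itself --- is open. You instead reduce the statement to producing an open neighbourhood of $1$ inside $H$, and obtain it by proving that $1$ is not an order-endpoint of $H$, using the monotonicity of self-homeomorphisms of a connected linearly ordered space applied to the left translations $x\mapsto h\oplus x$. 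Both arguments are sound: the paper's buys brevity (given the open-map lemma) and the extra conclusion that $G/H$ is discrete, while yours is self-contained and never needs quotient machinery. One remark worth making: the monotonicity lemma, which you rightly identify as the delicate step, is avoidable. Once you know $H$ is convex and densely ordered, pick any $a<b$ in $H$; then $(a,b)$ is a nonempty open subset of $G$ contained in $H$, and for any $u\in(a,b)$ the translate $\ominus u\oplus(a,b)$ is an open neighbourhood of $1$ contained in $\ominus u\oplus H=H$ (using left cancellation and the fact that $H$ is a subgyrogroup), after which your own reduction finishes the proof. This shortcut renders the endpoint analysis and the betweenness argument unnecessary, at no cost in generality.
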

\begin{proof}
Since $L$ is an $L$-subgyrogroup, let $R$ be the equivalence relation defined by $xRy$ if $y\in x\oplus H$. Now we can endow the set $G/R$ with the quotient topology for the map $\pi: G\rightarrow G/R$ defined by $\pi(x)=x\oplus H$ for each $x\in G$. Then $\pi$ is an open and continuous mapping by Lemma~\ref{8}. Since $H$ is connected, it follows that each $x\oplus H$ is connected and has at least two elements. Hence $\pi^{-1}(z)$ is connected and has at least two elements for each $z\in G/R$, then each $\pi^{-1}(z)$ contains a non-empty open interval since $G$ is topologically orderable. Therefore, $H$ must be an open $L$-subgyrogroup of $G$.
\end{proof}

	\begin{lemma}\label{10}
	The component of a topological gyrogroup is $L$-subgyrogroup.
\end{lemma}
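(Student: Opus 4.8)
The plan is to prove that the connected component $C$ of the identity element $1$ is an $L$-subgyrogroup of $G$; this is the intended reading, since a component disjoint from $1$ cannot contain the identity and so cannot be a subgyrogroup. I would argue in two stages: first that $C$ is a subgyrogroup, and then that every gyration fixes $C$ setwise.

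For the first stage the only tools needed are that continuous images of connected sets are connected and that $C$ is the (unique) maximal connected subset of $G$ containing $1$. Since the inversion $\ominus\colon G\to G$ is continuous, $\ominus C$ is connected and contains $\ominus 1=1$, whence $\ominus C\subseteq C$. For closure under $\oplus$, fix $a\in C$; the left translation $x\mapsto a\oplus x$ is continuous, so $a\oplus C$ is connected and contains $a\oplus 1=a$. As $a\in C$, the component of $a$ equals $C$, and therefore the connected set $a\oplus C$ is contained in $C$. Thus $a\oplus b\in C$ for all $a,b\in C$, and $C$ is a subgyrogroup.

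For the second stage I would prove the stronger fact that $gyr[g,h](C)=C$ for all $g,h\in G$. Applying left cancellation to axiom (3) gives the explicit formula $gyr[g,h](f)=\ominus(g\oplus h)\oplus\bigl(g\oplus(h\oplus f)\bigr)$, which presents $gyr[g,h]$ as a composition of continuous maps and hence as a continuous self-map of $G$. Moreover $gyr[g,h]\in Aut(G,\oplus)$ is a bijection whose inverse is again a gyration, namely $gyr[g,h]^{-1}=gyr[h,g]$ (the gyration inversion law, obtained by combining the left and right gyroassociative laws), so the inverse is continuous by the same formula. Hence each $gyr[g,h]$ is a homeomorphism of $G$ fixing $1$, and therefore maps the component $C$ of $1$ onto the component of $gyr[g,h](1)=1$, which is again $C$. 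This gives $gyr[g,h](C)=C$, so together with the first stage $C$ is an $L$-subgyrogroup.

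The step I expect to be the main obstacle is the homeomorphism property of the gyrations, specifically the continuity of the inverse $gyr[g,h]^{-1}$. Forward continuity is immediate from the displayed formula and yields only the inclusion $gyr[g,h](C)\subseteq C$; to promote this to the equality required by the $L$-subgyrogroup condition one genuinely needs the inverse to be continuous as well. I would secure this through the identity $gyr[g,h]^{-1}=gyr[h,g]$, deriving it from the gyroassociative laws if it is not available as a quoted result. The remaining manipulations are routine uses of left cancellation and the topological-gyrogroup axioms.
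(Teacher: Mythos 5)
Your proof is correct and follows essentially the same route as the paper: both arguments rest on the observation that each $gyr[g,h]$ is a continuous map fixing $1$, so its image of the identity component is connected and contains $1$, which forces gyration-invariance of the component. The only differences are minor: where the paper deduces the inclusion $gyr[g,x](H)\subseteq H$ from connectedness and then cites \cite[Proposition~6]{key3} to upgrade it to equality, you obtain equality directly by noting that $gyr[g,h]$ is a homeomorphism (via the gyration inversion law $gyr[g,h]^{-1}=gyr[h,g]$), and you also spell out the verification that the component is a subgyrogroup, which the paper dismisses as obvious.
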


\begin{proof}
	Let $G$ be a topological gyrogroup and $H$ be the component at the identity element $1$. It is obvious that $H$ is a subgyrogroup of $G$. Next, we prove that $H$ is an $L$-subgyrogroup. Indeed, take any $g\in G, x\in H$. Since $gyr[g, x](1)=1$ and $gyr[g, x](H)$ is connected, it follows that $gyr[g, x](H)\subset H$, which shows $gyr[g,x](H)=H$ by \cite[Proposition 6]{key3}. Hence $H$ is an $L$-subgyrogroup of $G$.
\end{proof}

Now we can prove one of the main result in this section.

\begin{theorem}\label{11}
	Let $G$ be a strongly topologically orderable gyrogroup which is not totally disconnected. Then $G$ contains an open and first-countable $L$-subgyrogroup. Thus, $G$ is metrizable.
\end{theorem}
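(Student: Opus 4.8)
The plan is to exploit the non-totally-disconnected hypothesis to locate a nontrivial connected set through the identity, and then promote it to an open subgyrogroup using the machinery already established. Since $G$ is not totally disconnected, the connected component $C$ of the identity element $1$ contains at least two distinct points. By Lemma~\ref{10}, $C$ is an $L$-subgyrogroup of $G$, and since $G$ is topologically orderable, Proposition~\ref{9} applies directly to $C$: a connected $L$-subgyrogroup with at least two elements must be open. Thus $C$ is an open $L$-subgyrogroup of $G$, which supplies the open $L$-subgyrogroup demanded by the statement.

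The remaining task is to show that this open subgyrogroup can be taken to be first-countable. Here I would use the fact that $C$, being connected and open (hence containing an open interval about $1$), is a connected LOTS (linearly ordered topological space) in the subspace order topology. A connected linearly ordered space is order-complete and has no jumps or gaps, so it behaves like an interval of the real line; in particular the point $1$ cannot be isolated from either side, and one should argue that $C$ has countable character at $1$. The cleanest route is to invoke the dichotomy of Theorem~\ref{4}: if $G$ were not metrizable, then by Theorem~\ref{4} it would admit a totally ordered local base at $1$ consisting of clopen $L$-subgyrogroups $\mathcal{H}$. But a clopen subgyrogroup $H \in \mathcal{H}$ that is properly contained in the connected open set $C$ would disconnect $C$ (splitting $C$ into $H$ and its complement within $C$, both relatively clopen and nonempty), contradicting connectedness of $C$. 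Hence every member of $\mathcal{H}$ must contain $C$, forcing the local base at $1$ to be essentially trivial near $C$ and contradicting that $1$ is non-isolated in the connected piece. This rules out the non-metrizable case, so $G$ falls into case~(1) of Theorem~\ref{4} and is metrizable.

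Once metrizability is in hand, first-countability of the open subgyrogroup $C$ is automatic, since every subspace of a metrizable space is metrizable and hence first-countable. This closes the argument: $G$ contains the open, first-countable $L$-subgyrogroup $C$, and $G$ itself is metrizable.

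I expect the main obstacle to be the step that excludes the non-metrizable case cleanly. The delicate point is to verify rigorously that the existence of a nontrivial connected open subgyrogroup is genuinely incompatible with a totally ordered base of clopen $L$-subgyrogroups — one must be careful that the clopen members of $\mathcal{H}$ really do induce a nontrivial clopen partition of $C$ rather than all containing $C$ from the outset. Controlling the interaction between the order-theoretic local base furnished by Theorem~\ref{4} and the connectivity of $C$, and ensuring the contradiction is forced at $1$ rather than merely at some other point, is where the real care is needed; the rest of the proof is a direct assembly of Lemma~\ref{10}, Proposition~\ref{9}, and Theorem~\ref{4}.
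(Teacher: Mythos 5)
Your proposal is correct, and its second half takes a genuinely different route from the paper. The first half coincides: both you and the paper obtain the component $C$ of the identity as a connected $L$-subgyrogroup with at least two points (Lemma~\ref{10}) and then invoke Proposition~\ref{9} to make it open. For first-countability/metrizability, however, the paper works inside $C$: it cites external results on orderable spaces to get that $C$ is Hausdorff and locally compact, then argues by contradiction that $C$ is first-countable --- if not, Lemma~\ref{3} and a countable chain of symmetric neighborhoods produce a compact open subgyrogroup $V\subseteq C$, connectedness forces $C=V$, and then $C$ would be a compact connected homogeneous orderable space with at least two points, which is impossible by a result of Venkataraman--Rajagopalan--Soundararajan; metrizability of $C$ and then of $G$ follows from first-countability. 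You instead apply the dichotomy of Theorem~\ref{4} to $G$ directly: if $G$ were non-metrizable it would have a local base $\mathcal{H}$ at $1$ of clopen $L$-subgyrogroups, but every clopen set containing $1$ must contain the whole connected component $C$ (since $H\cap C$ is a nonempty clopen subset of $C$), so no member of $\mathcal{H}$ fits inside the open neighborhood $G\setminus\{x\}$ of $1$ for $x\in C\setminus\{1\}$ (such $x$ exists, and the set is open because the order topology is $T_1$) --- contradicting that $\mathcal{H}$ is a local base. Your route is shorter and more elementary in that it avoids the two external citations on orderable spaces (local compactness of connected orderable spaces, and non-orderability of compact connected homogeneous spaces), trading them for the internal structure theorem~\ref{4}; the paper's route yields extra structural information (local compactness and metrizability of the component obtained intrinsically) and proves the implication in the order the statement suggests (open first-countable subgyrogroup $\Rightarrow$ $G$ metrizable), whereas you prove metrizability of $G$ first and deduce first-countability of $C$ afterwards, which is logically just as adequate. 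One presentational tightening: where you say the situation ``contradict[s] that $1$ is non-isolated in the connected piece,'' the precise contradiction is with the local base property of $\mathcal{H}$, via the fact that all of its members contain $C$; stating it that way removes the only vague step in your write-up.
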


\begin{proof}
	Suppose $H$ is the component at the identity element $1$. Since $G$ is not totally disconnected, it follows from Lemma~\ref{10} that $H$ is a connected $L$-subgyrogroup with at least two elements. Moreover,  Proposition~\ref{9} implies that $H$ is an open $L$-subgyrogroup of $G$.

By \cite[Corollary 1.4]{key8}, $H$ as a topological subspace of $G$ is also a topologically ordered space. Since every topologically ordered space is Hausdorff, it follows that $H$ is Hausdorff. Then $H$ is locally compact by \cite{key11}. Suppose $U$ is compact neighborhood of identity element of $H$. We claim that $H$ is first-countable.

Suppose not, there is a totally ordered base $\mathcal{U}$ of symmetric neighborhoods of the identity element of $H$ from Lemma~\ref{3}. Clearly, there exists a countable subfamily $\{U_{i}: i\in\omega\}$ of $\mathcal{U}$ such that $U_{n+1}\oplus U_{n+1}\subset U_{n}\subset U$ for each $n\in\omega$. Put $V=\bigcap_{n\in\omega}U_n$; then $V$ is a neighborhood of the identity element. Obviously, for any $x,y\in V$, we have $x\oplus y\in U_{n}, \ominus x\in\ominus U_{n}=U_{n}$ for all $n\in\mathbb{N}$, hence we have $x\oplus y\in V, \ominus x\in V$. Thus $V$ is an open subgyrogroup of $H$. By \cite[Proposition 2.11]{key6}, $V$ is a clopen subgyrogroup, so $V$ is compact since $V\subset U$. Since $H$ is connected, it follows that $H=V$. Then $H$ is compact. However, from \cite[Proposition 1.6]{key8}, it follows that each compact connected homogeneous space with at least two elements is not orderable, which is a contradiction. Therefore, $H$ is first-countable. From \cite[Theorem2.3]{key5}, it follows that $H$ is metrizable.

Since $H$ is open and first-countable, it follows that $G$ is first-countable. By applying \cite[Theorem 2.3]{key5} again, $G$ is metrizable.
\end{proof}

By \cite[Theorem 1]{key13} and Theorem~\ref{11}, we have the following important corollary, which gives a generalization of Theorem~\ref{t1}.

\begin{corollary}\label{12}
	Every strongly topologically orderable gyrogroup that is not totally disconnected admits a suitable set.
\end{corollary}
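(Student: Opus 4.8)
The plan is to reduce the statement to the metrizable case and then invoke an existing existence result for suitable sets in metrizable gyrogroups. Concretely, let $G$ be a strongly topologically orderable gyrogroup which is not totally disconnected. Theorem~\ref{11} has already carried out the substantive work: it produces an open, first-countable $L$-subgyrogroup of $G$ and, using first-countability of that subgyrogroup together with \cite[Theorem 2.3]{key5}, concludes that $G$ itself is metrizable. So my first step is simply to record that $G$ is metrizable.

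The second step is to feed this metrizability into \cite[Theorem 1]{key13}, which guarantees that a metrizable (strongly) topological gyrogroup admits a suitable set; that is, a discrete set $S$ with $S\cup\{1\}$ closed in $G$ and with the subgyrogroup generated by $S$ dense in $G$. Combining the two facts yields the corollary immediately. This mirrors exactly the group-theoretic template recorded in Theorem~\ref{t1}, where the hypothesis ``not totally disconnected'' forces metrizability (there via an open copy of $\mathbb{R}$) and metrizability then delivers the suitable set; here Theorem~\ref{11} plays the role of the metrizability reduction.

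Where is the difficulty? Essentially all of it has been pushed into Theorem~\ref{11}, so the corollary is a two-line deduction and its only delicate point is to confirm that the hypotheses of \cite[Theorem 1]{key13} genuinely cover metrizable topologically orderable gyrogroups, making the reduction legitimate. If instead one wanted a self-contained argument for the metrizable case rather than quoting \cite{key13}, the real obstacle would shift to constructing the suitable set by hand: one would use first-countability to fix a decreasing sequence of symmetric neighborhoods $\{V_n\}$ of $1$ with $V_{n+1}\oplus V_{n+1}\subseteq V_n$, choose points $s_n$ witnessing density of the generated subgyrogroup while keeping the chosen set discrete, and arrange that $1$ is the only possible accumulation point of $S$ so that $S\cup\{1\}$ is closed. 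Controlling the gyroautomorphisms $gyr[x,y]$ through the cancellation steps, exactly as in Lemma~\ref{1} and in the argument of Theorem~\ref{11}, would be the technical heart of such a direct construction.
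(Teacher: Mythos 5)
Your proposal is correct and follows exactly the paper's own route: the paper derives this corollary precisely by combining Theorem~\ref{11} (metrizability of a not totally disconnected strongly topologically orderable gyrogroup) with \cite[Theorem 1]{key13} (every metrizable strongly topological gyrogroup has a suitable set). The additional sketch of a hand-made construction is unnecessary but does not detract from the argument.
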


The following theorem shows that each strongly topologically orderable gyrogroup with countable pseudocharacter has a suitable set.

\begin{theorem}\label{14}
Suppose that $G$ is a strongly topologically orderable gyrogroup and $\{1\}$ is a $G_{\delta}$-set, then $G$ is metrizable and has a suitable set.
\end{theorem}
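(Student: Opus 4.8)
The plan is to first establish metrizability and then deduce the existence of a suitable set. For metrizability I would invoke the dichotomy of Theorem~\ref{4}: either $G$ is already metrizable, in which case there is nothing to prove, or $G$ admits a totally ordered local base $\mathcal{H}=\{H_{\alpha}:\alpha<\tau\}$ at $1$ consisting of clopen $L$-subgyrogroups with $gyr[x,y](H_{\alpha})=H_{\alpha}$ for all $x,y\in G$. In the latter case, write $\{1\}=\bigcap_{n\in\omega}U_{n}$ with each $U_{n}$ open, and use that $\mathcal{H}$ is a local base to choose $\alpha_{n}<\tau$ with $H_{\alpha_{n}}\subseteq U_{n}$. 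Then $\bigcap_{n\in\omega}H_{\alpha_{n}}=\{1\}$, and since $\mathcal{H}$ is linearly ordered by inclusion, every member of $\mathcal{H}$ that is not equal to $\{1\}$ must contain some $H_{\alpha_{n}}$ (otherwise it would lie below all of them and hence inside $\{1\}$). Thus $\{H_{\alpha_{n}}:n\in\omega\}$ is a countable local base at $1$; by homogeneity $G$ is first-countable, so $G$ is metrizable by \cite[Theorem 2.3]{key5}. The degenerate case in which $1$ is isolated makes $G$ discrete, hence trivially metrizable.

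For the suitable set I would split on total disconnectedness. If $G$ is not totally disconnected, Corollary~\ref{12} already supplies a suitable set. If $G$ is totally disconnected, I would build one explicitly from the decreasing chain of clopen $L$-subgyrogroups $G\supseteq H_{\alpha_{0}}\supseteq H_{\alpha_{1}}\supseteq\cdots$ obtained above (re-indexed so that it is decreasing with $\bigcap_{n}H_{\alpha_{n}}=\{1\}$). For each $n$, choose a set $S_{n}$ consisting of exactly one representative from each left coset of $H_{\alpha_{n+1}}$ in $H_{\alpha_{n}}$ other than $H_{\alpha_{n+1}}$ itself, and put $S=\bigcup_{n\in\omega}S_{n}$. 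Density of the generated subgyrogroup follows by a level-by-level coset approximation: given $a\in G$ and $m\in\omega$, repeated use of left cancellation together with the disjoint coset partition of Theorem~\ref{ttttt} lets one represent the coset $a\oplus H_{\alpha_{m}}$ by a suitable $\oplus$-product of elements $s_{0},\dots,s_{m-1}$ with each $s_{i}\in S_{i}\cup\{1\}$, so that the generated subgyrogroup meets every basic clopen coset. Discreteness and closedness come from the fact that for $s\in S_{n}$ the coset $s\oplus H_{\alpha_{n+1}}$ is a clopen neighborhood meeting $S$ only in $s$, while for $g\notin S\cup\{1\}$ a sufficiently small coset $g\oplus H_{\alpha_{M}}$ avoids $1$ and every $S_{n}$; hence $S\cup\{1\}$ is closed and $S$ is discrete. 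Alternatively, since metrizability is now in hand, one may conclude directly as in Corollary~\ref{12} via \cite[Theorem 1]{key13}.

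The main obstacle I anticipate is the totally disconnected construction rather than metrizability. Verifying density and, especially, closedness requires careful bookkeeping with the non-associative gyrogroup operation and the gyrations: the coset-approximation step must be justified through repeated left cancellation and the $L$-subgyrogroup partition of Theorem~\ref{ttttt}, and the separation of a point $g\notin S\cup\{1\}$ from the unique coset representative lying in its top nontrivial coset must be arranged by shrinking to a small enough coset, using $\bigcap_{n}H_{\alpha_{n}}=\{1\}$ and Hausdorffness. The invariance $gyr[x,y](H_{\alpha_{n}})=H_{\alpha_{n}}$ furnished by Theorem~\ref{4} is what guarantees that these cosets behave as cleanly as subgroup cosets do in the group case, and it is the key ingredient that lets the classical group-theoretic construction transfer to the gyrogroup setting.
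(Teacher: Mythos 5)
Your metrizability argument is correct but takes a genuinely different route from the paper. The paper's proof is a citation chain: \cite[Proposition 1.10]{key8} gives first-countability of a topologically orderable space whose points are $G_{\delta}$-sets, \cite[Theorem 2.3]{key5} then gives metrizability, and \cite[Theorem 1]{key13} gives the suitable set. You instead run the dichotomy of Theorem~\ref{4}: in the non-metrizable branch you place members $H_{\alpha_{n}}$ of the totally ordered clopen base inside the $G_{\delta}$-witnesses, and your comparability argument (any member of the chain other than $\{1\}$ must contain some $H_{\alpha_{n}}$, else it lies in $\bigcap_{n}H_{\alpha_{n}}=\{1\}$) correctly produces a countable local base at $1$; homogeneity plus \cite[Theorem 2.3]{key5} finishes, with the isolated-identity case handled separately. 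This trades the order-theoretic input from \cite{key8} for the structural Theorem~\ref{4}; both work, yours being more self-contained, the paper's shorter.

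The suitable-set half is where your primary plan has a genuine gap. Your explicit coset-representative construction requires a decreasing sequence of clopen $L$-subgyrogroups with $\bigcap_{n}H_{\alpha_{n}}=\{1\}$, but such a sequence is furnished only by case (2) of Theorem~\ref{4}; total disconnectedness does not put you in case (2). For instance, $\mathbb{Q}$ with its usual topology is a totally disconnected, metrizable, strongly topologically orderable gyrogroup (a topological group, in fact) with $G_{\delta}$ points, yet it has no proper open subgyrogroup at all, since any open subgroup contains a neighborhood of $0$ and hence generates everything; in your ``totally disconnected'' branch the chain you propose to build from simply does not exist there. A second, smaller defect: even when the chain exists, your $S$ lies in $H_{\alpha_{0}}$, hence so does the subgyrogroup it generates, and since $H_{\alpha_{0}}$ is clopen we get $\overline{\langle S\rangle}\subseteq H_{\alpha_{0}}$; density fails unless you arrange $H_{\alpha_{0}}=G$ by prepending $G$ to the chain. (Granting that fix, your density induction is sound: the invariance $gyr[s_{0},s_{1}](H_{\alpha_{2}})=H_{\alpha_{2}}$ is exactly what makes $s_{0}\oplus(s_{1}\oplus h)\in(s_{0}\oplus s_{1})\oplus H_{\alpha_{2}}$ go through, as you anticipated.) Fortunately none of this machinery is needed: the fallback you mention in one line --- once metrizability is in hand, invoke \cite[Theorem 1]{key13} --- is precisely the paper's proof, covers all cases uniformly, and should simply be the proof; the explicit construction should be dropped or restricted to the case where Theorem~\ref{4}(2) actually applies.
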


\begin{proof}
	By \cite[Proposition 1.10]{key8}, we conclude that $G$ is first-countable, then $G$ is metrizable by \cite[Theorem 2.3]{key5}. Then it follows from \cite[Theorem 1]{key13} that $G$ contains a suitable set.
\end{proof}

The following theorem gives a characterization of a separable and totally disconnected topological gyrogroup such that it is topologically orderable.

\begin{theorem}\label{15}
	Let $G$ be a separable and totally disconnected topological gyrogroup. Then $G$ is topologically orderable space if and only if it is metrizable and zero-dimensional.
\end{theorem}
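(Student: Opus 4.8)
The plan is to establish the two implications separately; each reduces to one genuinely order-theoretic observation together with a result already available in the paper (or in the cited literature).

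\emph{Necessity} (orderable $\Rightarrow$ metrizable and zero-dimensional). The key point is the purely order-theoretic fact that \emph{every separable linearly ordered topological space is first-countable}. To see this I would fix a countable dense set $D\subseteq G$ and compute the character of an arbitrary non-isolated point $p$ one side at a time. If $p$ has no immediate successor, then every open interval $(p,x)$ with $x>p$ is nonempty and therefore meets $D$; hence $D$ is coinitial in $\{x\in G:x>p\}$, this set has coinitiality at most $\omega$, and a strictly decreasing sequence extracted from $D\cap\{x:x>p\}$ furnishes a countable base of right neighborhoods of $p$. The symmetric argument treats the left side, while an immediate neighbor or a one-sided endpoint contributes trivially; intersecting the two sides gives a countable base at $p$. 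Once $G$ is first-countable, \cite[Theorem 2.3]{key5} upgrades this to metrizability, since $G$ is a topological gyrogroup. Finally, $G$ is a totally disconnected LOTS, and such a space is zero-dimensional (this is the content of the equivalence (1)$\Leftrightarrow$(2) of Theorem~\ref{6}; concretely, total disconnectedness forces a jump or a gap inside every nondegenerate order interval, which produces a base of clopen order-convex sets, i.e. $\mathrm{ind}\,G=0$).

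\emph{Sufficiency} (metrizable and zero-dimensional $\Rightarrow$ orderable). Here the gyrogroup structure plays no further part; it suffices to reconcile the dimension functions and then invoke the classical orderability theorem for metric spaces. Because $G$ is separable and metrizable it is second countable, so $\mathrm{ind}\,G=\mathrm{Ind}\,G=\dim G$; the hypothesis of zero-dimensionality therefore gives $\dim G=0$, i.e. covering dimension zero. By the result of \cite{HH1965} used in Case 2 of the proof of Theorem~\ref{6}, every metric space of covering dimension zero is (topologically) orderable, so $G$ carries a compatible total order. This establishes topological orderability and closes the equivalence.

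The step I expect to be the main obstacle is the first one: verifying first-countability of the separable orderable space $G$, where one must analyse the character of a point according to whether it is two-sided, has an immediate neighbor on one side, or is an endpoint, and check in each non-degenerate case that the countable dense set is coinitial (respectively cofinal) to the point. After that the argument is bookkeeping, namely the metrizability transfer via \cite[Theorem 2.3]{key5} and the equality of the three dimension functions in the separable metric setting, followed by the citation \cite{HH1965}. It is worth noting that this proof uses only the (not necessarily strong) topological gyrogroup structure, since the gyrogroup axioms enter solely through the passage from first-countability to metrizability.
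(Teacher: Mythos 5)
Your proposal is correct, and its overall skeleton coincides with the paper's: orderable $\Rightarrow$ first-countable $\Rightarrow$ metrizable (via \cite[Theorem 2.3]{key5}, exactly as the paper does) $\Rightarrow$ zero-dimensional, and a classical orderability theorem for the converse. The difference is in how the individual steps are discharged. Where the paper simply cites \cite[Proposition 1.8]{key8} for first-countability of a separable orderable space, you prove it directly by the coinitiality/cofinality analysis of a countable dense set at each point; your argument is sound (the family $\{[p,d): d\in D,\ d>p\}$ already forms a countable right base, so even the extraction of a decreasing sequence is dispensable). For zero-dimensionality the paper cites \cite{HH1965}, while you give the intrinsic jump-or-gap argument showing that a totally disconnected LOTS has a base of clopen order-convex sets; this is correct and in fact sharper, since it yields $\mathrm{ind}\,G=0$ without using metrizability at all. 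One caution: your parenthetical appeal to Theorem~\ref{6} is not quite legitimate as a citation, because that theorem is stated for \emph{strongly} topological gyrogroups while Theorem~\ref{15} assumes only a topological gyrogroup; fortunately your direct argument makes that reference superfluous. Finally, for sufficiency the paper invokes Lynn \cite{L1963} (separable metric zero-dimensional spaces are orderable), whereas you route through the coincidence $\mathrm{ind}=\mathrm{Ind}=\dim$ for separable metric spaces and Herrlich's theorem \cite{HH1965} that metric spaces of covering dimension zero are orderable; both are valid, and your version has the small advantage of reusing a citation already needed elsewhere in the paper (Case 2 of Theorem~\ref{6}) instead of introducing a new one, at the cost of explicitly handling the identification of the dimension functions.
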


\begin{proof}
Let $G$ be a topologically orderable gyrogroup. Then it follows from \cite[Proposition 1.8]{key8} that $G$ is first-countable, then it is metrizable by \cite[Theorem 2.3]{key5}. Since a metrizable totally disconnected space is zero-dimensional by \cite{HH1965}, we conclude that $G$ is zero-dimensional. Conversely, each separable
metric zero-dimensional space is orderable has been proved in \cite{L1963}.
\end{proof}

Finally, we prove the second main theorem in this section. First, we give some technical lemmas.

\begin{lemma}\label{16}
Let $G$ be a totally disconnected locally compact strongly topological gyrgroup. Then every neighbourhood of the identity contains a compact open $L$-subgyrgroup.
\end{lemma}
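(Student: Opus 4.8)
The plan is to adapt van Dantzig's theorem (a totally disconnected locally compact group has a neighbourhood base of compact open subgroups) to the gyrogroup setting, using the strong structure to tame gyroassociativity. Fix a neighbourhood $V$ of the identity $1$. First I would record that a locally compact totally disconnected, hence zero-dimensional, gyrogroup has a base of compact open sets at $1$: choosing a compact neighbourhood $K\subseteq V$ and a clopen set $U$ with $1\in U\subseteq \operatorname{int} K$, the set $U$ is compact (being closed in $K$) and open, with $U\subseteq V$. Next, using the continuity of $\oplus$ at each point $(u,1)$ with $u\in U$ together with the compactness of $U$, a standard finite-subcover argument yields an open neighbourhood $W_{0}$ of $1$ with $U\oplus W_{0}\subseteq U$. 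Since $G$ is a strongly topological gyrogroup, by Lemma~\ref{3} the symmetric gyr-invariant neighbourhoods form a base, so I may fix such a $W\subseteq W_{0}\cap U$ with $\ominus W=W$ and $gyr[x,y](W)=W$ for all $x,y\in G$; it still satisfies $U\oplus W\subseteq U$.

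The core of the argument is to build from $W$ a subgyrogroup that remains trapped inside $U$. Define the left-associated powers $W^{[1]}=W$ and $W^{[n+1]}=W^{[n]}\oplus W$, and set $H=\bigcup_{n\geq 1}W^{[n]}$. An immediate induction using $U\oplus W\subseteq U$ and $W\subseteq U$ shows $W^{[n]}\subseteq U$ for every $n$, so $H\subseteq U$. The main point is that $H$ is a subgyrogroup. For closure under $\oplus$, given $a\in W^{[m]}$ and $b=b'\oplus w\in W^{[n]}$ with $b'\in W^{[n-1]}$ and $w\in W$, gyroassociativity gives $a\oplus b=(a\oplus b')\oplus gyr[a,b'](w)$, and since $gyr[a,b'](W)=W$ the appended factor again lies in $W$; iterating reduces $a\oplus b$ to a left-associated product of $a$ with $W$-factors, placing it in $W^{[m+n]}$. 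For closure under $\ominus$, I would use the identity $\ominus(c\oplus w)=gyr[c,w](\ominus w\oplus\ominus c)$ together with $\ominus W=W$ and the fact that each $gyr[x,y]$, being an automorphism fixing $W$, fixes every $W^{[k]}$; this reduces inversion to the closure under $\oplus$ already established. Consequently $H$ is a subgyrogroup contained in $U$.

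Finally, each $W^{[n]}$ is open, being a union $\bigcup_{a\in W^{[n-1]}}(a\oplus W)$ of left translates of the open set $W$, so $H$ is open; by \cite[Proposition 2.11]{key6} an open subgyrogroup is clopen, whence $H$ is closed in the compact set $U$ and therefore compact. Since each $gyr[x,y]$ fixes $W$ and hence every $W^{[n]}$, it fixes $H$, so $gyr[x,y](H)=H$ for all $x,y\in G$; in particular $H$ is an $L$-subgyrogroup contained in $V$. I expect the main obstacle to be exactly the control of gyroassociativity in the closure arguments: unlike the group case, products cannot be freely reassociated, and the whole scheme depends on the gyrations fixing the generating neighbourhood $W$, so that every reassociation only permutes factors within $W$. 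The strongly topological hypothesis, via Lemma~\ref{3}, is precisely what makes such a $W$ available.
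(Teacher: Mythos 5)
Your proposal is correct and follows essentially the same van Dantzig-style route as the paper's own proof: obtain a compact open neighbourhood $U$ from zero-dimensionality, use compactness to produce a symmetric gyr-invariant neighbourhood $W$ of $1$ with $U\oplus W\subseteq U$, and show that the subgyrogroup generated by $W$ stays inside $U$, is open (hence clopen and compact), and is fixed by all gyrations (hence an $L$-subgyrogroup) — your write-up is in fact more explicit than the paper's at the generation step, where the paper only sketches why gyroassociativity causes no trouble. One small repair: you cannot invoke Lemma~\ref{3} here, since that lemma presupposes orderability while Lemma~\ref{16} assumes only a strongly topological gyrogroup; the symmetric gyr-invariant neighbourhoods you need come directly from the definition of a strongly topological gyrogroup (gyrations commute with inversion, so $U\cap\ominus U$ remains gyr-invariant), which is exactly the base the paper starts its proof with.
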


\begin{proof}
Let $G$ have a symmetric neighborhood base $\mathcal{B}$ at the identity element such that $gyr[x, y](B)=B$ for any $x, y\in G$ and $B\in\mathcal{B}$. Since $G$ is totally disconnected and locally compact, it follows from Vedenissov's Theorem that $G$ is zero-dimension, hence there exists a compact open neighborhood $U$ of $1$. For each $x\in U$, there exist $U_{x}, V_{x}\in\mathcal{B}$ such that $x\oplus U_{x}\subset U$,  $U_{x}\oplus x\subset U$ and $V_{x}\oplus V_{x}\subset U_{x}$. By the compactness of $U$, there exists a finite set $\{x_{1}, \ldots, x_{n}\}$ such that $U\subset (\bigcup_{i=1}^{n}(x_{i}\oplus V_{x_{i}}))\cap (\bigcup_{i=1}^{n}(V_{x_{i}}\oplus x_{i}))$. Put $V=\bigcap_{i=1}^{n}V_{x_{i}}$. Then
\begin{align*}
	    		U\oplus V&\subset (\bigcup_{i=1}^{n}x_{i}\oplus V_{x_{i}})\oplus V\\
	    		&\subset\bigcup_{i=1}^{n}((x_{i}\oplus V_{x_{i}})\oplus V_{x_{i}})\\
	    		&=\bigcup_{i=1}^{n}((x_{i}\oplus (V_{x_{i}}\oplus gyr[x_{i}, V_{x_{i}}](V_{x_{i}})))\\
	    		&=\bigcup_{i=1}^{n}((x_{i}\oplus (V_{x_{i}}\oplus V_{x_{i}}))\\
	    		&=\bigcup_{i=1}^{n}((x_{i}\oplus U_{x_{i}})\\
	    		&\subset U.
	    	\end{align*}

Therefore, $V\subset U\oplus V\subset U$, which implies that $V\oplus V\subset (U\oplus V)\oplus V\subset U\oplus V\subset U$ and $(V\oplus V)\oplus V\subset U$. Moreover, since $gyr[x, y](V)=V$ for any $x, y\in G$, it follows that $V\oplus (V\oplus V)=(V\oplus V)\oplus V$. Therefore, the subgyrgroup $H$ generated by $V$ is contained in $U$ and open in $G$, thus it is closed and compact. Because $gyr[x, y](V)=V$ for any $x, y\in G$, we conclude that $H$ is an $L$-subgyrgroup.
\end{proof}

\begin{lemma}\label{17}
Let $G$ be an infinite, locally compact, totally disconnected strongly topologically orderable gyrogroup. Then either $G$ is discrete or $G$ contains a clopen $L$-subgyrgroup $H$ which as a topological space is homeomorphic with the Cantor set with its usual topology.
\end{lemma}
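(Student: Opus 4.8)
The plan is to dispose of the discrete case at once and, when $G$ is not discrete, to manufacture the Cantor set as a single compact open $L$-subgyrogroup supplied by Lemma~\ref{16}. If $G$ is discrete the first alternative of the statement holds, so from now on I would assume $G$ is not discrete. Since $G$ is totally disconnected, locally compact and strongly topological, Lemma~\ref{16} hands me a compact open $L$-subgyrogroup $K$ of $G$; by \cite[Proposition 2.11]{key6} an open subgyrogroup is automatically closed, so $K$ is clopen. Because $K$ is open in the homogeneous space $G$ and $G$ is not discrete, $K$ cannot be discrete; being itself a topological gyrogroup, hence homogeneous, $K$ then has no isolated points, i.e. $K$ is perfect and in particular infinite. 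Moreover $K$ is totally disconnected as a subspace of $G$, and, arguing exactly as in the proof of Theorem~\ref{11}, $K$ is a strongly topologically orderable gyrogroup: the traces on $K$ of a symmetric gyration-invariant base of $G$ form such a base for $K$, while \cite[Corollary 1.4]{key8} guarantees that the open subgyrogroup $K$ is again topologically orderable.

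The heart of the argument, and the step I expect to be the main obstacle, is to prove that $K$ is metrizable; once this is in hand everything else is soft. I would argue by contradiction. If $K$ were not metrizable, then $K$ would be a non-metrizable, topologically orderable, strongly topological gyrogroup, so the implication (1)$\Rightarrow$(4) of Theorem~\ref{5} yields a base $\mathcal{B}=\bigcup_{\tau<\alpha}\mathcal{V}_{\tau}$ for $K$, where each $\mathcal{V}_{\tau}$ is a partition of $K$ into clopen sets and $\mathcal{V}_{\tau}$ refines $\mathcal{V}_{\sigma}$ whenever $\sigma<\tau$. Here compactness is decisive: each $\mathcal{V}_{\tau}$ is a cover of the compact space $K$ by pairwise disjoint open sets, hence is finite. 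Discarding the indices at which no genuine refinement takes place, I may assume the chain $(\mathcal{V}_{\tau})$ is strictly refining, so that $|\mathcal{V}_{\sigma}|<|\mathcal{V}_{\tau}|$ for $\sigma<\tau$. Thus $\tau\mapsto|\mathcal{V}_{\tau}|$ is a strictly increasing map from $\alpha$ into $\mathbb{N}$, which forces $\alpha\le\omega$. Consequently $\mathcal{B}$ is countable, $K$ is second countable, and a compact second-countable Hausdorff space is metrizable, contradicting the assumption. Hence $K$ is metrizable.

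Collecting the properties just established, $K$ is a nonempty, compact, metrizable, totally disconnected space without isolated points, so by Brouwer's characterization of the Cantor set, $K$ is homeomorphic to the Cantor set with its usual topology. As $K$ is at the same time a clopen $L$-subgyrogroup of $G$, the second alternative of the statement holds, completing the proof. The whole plan therefore pivots on turning the refining clopen partitions of Theorem~\ref{5} into \emph{finite} partitions by invoking the compactness of $K$, after which the elementary remark that a strictly increasing sequence of positive integers is countable collapses the entire base to a countable one and delivers metrizability; all remaining ingredients (homogeneity forcing perfectness, Brouwer's theorem, and the inheritance of the strongly orderable structure by open subgyrogroups) are routine.
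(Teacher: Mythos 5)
Your proposal is correct and follows essentially the same route as the paper: both obtain a compact clopen $L$-subgyrogroup from Lemma~\ref{16}, both prove its metrizability by contradiction using compactness to force finiteness --- your finite clopen partitions from statement (4) of Theorem~\ref{5} are exactly the coset partitions of the paper's chain of clopen $L$-subgyrogroups from Theorem~\ref{4}, so in each case an uncountable strictly increasing family of positive integers yields the contradiction --- and both finish with Brouwer's characterization of the Cantor set. The only cosmetic differences are that you conclude second countability plus Urysohn metrization inside the contradiction where the paper concludes first countability and cites \cite[Theorem 2.3]{key5}, and that you are more explicit than the paper about perfectness via homogeneity and about why the compact subgyrogroup inherits the strongly topologically orderable structure.
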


\begin{proof}
  By Lemma~\ref{16}, let $H$ be a compact, clopen $L$-subgyrgroup. We conclude that $H$ is metrizable, thus it is first-countable. Otherwise, it follows from Theorem~\ref{4} that $H$ has a totally ordered local base $\{U_{\alpha}: \alpha\in\tau\}$ at the identity element consisting of clopen $L$-subgyrogroup, where $\tau\geq\omega_{1}$ and $U_{\beta}\setminus U_{\alpha}\neq\emptyset$ for any $\alpha>\beta$. Since $H$ is compact, the left cosets of each $U_{\alpha}$ is finite in number, which is denoted by $n_{\alpha}$. Then $n_{\alpha}>n_{\beta}$ for any $\alpha>\beta$ since $U_{\beta}\setminus U_{\alpha}\neq\emptyset$. However, the set $\{n_{\alpha}: \alpha\in\tau\}$ is countable, hence there exists $\gamma<\omega_{1}$ such that $n_{\alpha}=n_{\gamma}$ for any $\alpha>\gamma$. Thus $H$ must be first-countable, which is a contradiction. Therefore, $H$ is metrizable.

If $G$ is not discrete, then $H$ is also not discrete. Since $H$ is compact, metrizable and totally disconnected, it follows that $H$ is homeomorphic to the cantor set.
\end{proof}

Now we can prove the following theorem.

\begin{theorem}
Each locally compact strongly topologically orderable gyrogroup is metrizable; thus it has a suitable set.
\end{theorem}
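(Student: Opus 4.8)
The plan is to reduce the whole statement to a single metrizability claim and then quote the existence results already in hand. The unifying strategy is to produce an open, first-countable subgyrogroup of $G$: since left translations of a topological gyrogroup are homeomorphisms (their inverses are again left translations, by the left cancellation law), first-countability spreads from the identity to every point, so $G$ itself is first-countable and hence metrizable by \cite[Theorem 2.3]{key5}. Once metrizability is secured, a suitable set is furnished by \cite[Theorem 1]{key13}.

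First I would dispose of the case where $G$ is not totally disconnected, which is immediate: Theorem~\ref{11} already asserts that a strongly topologically orderable gyrogroup which is not totally disconnected is metrizable. So I may assume $G$ is totally disconnected. If $G$ is discrete (in particular, if it is finite) it is trivially metrizable, so I may further assume that $G$ is infinite and non-discrete.

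Now I would invoke Lemma~\ref{17}, whose hypotheses (infinite, locally compact, totally disconnected, strongly topologically orderable) are exactly the present ones. Since $G$ is not discrete, the lemma produces a clopen $L$-subgyrogroup $H$ homeomorphic to the Cantor set. The Cantor set is compact and metrizable, hence first-countable, so $H$ is first-countable. Because $H$ is open in $G$ and contains $1$, any countable neighbourhood base of $1$ in $H$ is also a neighbourhood base of $1$ in $G$; translating by left translations then shows that $G$ is first-countable, whence metrizable by \cite[Theorem 2.3]{key5}.

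The step I expect to be the crux is the passage from the open first-countable subgyrogroup $H$ to first-countability of $G$: one must verify that openness of $H$ genuinely promotes a local base at $1$ in $H$ to a local base at $1$ in $G$, and that homogeneity then distributes this to all points. This is exactly the argument closing the proof of Theorem~\ref{11}, so it can be reused essentially verbatim. With $G$ metrizable, \cite[Theorem 1]{key13} (equivalently, Theorem~\ref{14}, since metrizability makes $\{1\}$ a $G_{\delta}$-set) supplies the desired suitable set.
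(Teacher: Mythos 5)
Your proof is correct and follows essentially the same route as the paper: the paper's own (very terse) proof likewise splits into the non-totally-disconnected case via Theorem~\ref{11} and the totally disconnected case via Lemma~\ref{17}, then invokes \cite[Theorem 1]{key13} for the suitable set. The details you supply (discreteness handled separately, the Cantor-set subgyrogroup giving first-countability, and openness plus homogeneity promoting this to all of $G$) are exactly the implicit steps the paper relies on, reused from the end of the proof of Theorem~\ref{11}.
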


\begin{proof}
By Theorem~\ref{11} and Lemma~\ref{17}, $G$ is metrizable. Since each metrizable strongly topological gyrogroup has a suitable set \cite[Theorem 1]{key13}, we conclude $G$ has suitable set.
\end{proof}

\section{dense subgyrogroups of stronglly topologically orderable gyrogroups}
In this section, we mainly show that a strongly topologically orderable gyrogroup has a (closed) suitable set if and only if each dense subgyrogroup of it has a (closed) suitable set. First, we need some lemmas.

\begin{lemma}\label{18}
Let $G$ be a strongly topologically orderable gyrogroup and $D$ be a discrete subset of $G$. Then the points of $D$ can be separated by pairwise disjoint neighborhoods.
\end{lemma}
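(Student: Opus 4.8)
The plan is to split into the two alternatives furnished by Theorem~\ref{4}, handling the metrizable case by a routine metric computation and the non-metrizable case by exploiting the tree structure of the clopen coset base. So the first step is to invoke Theorem~\ref{4}: either $G$ is metrizable, or $G$ carries a totally ordered local base $\mathcal{H}=\{H_{\alpha}:\alpha<\tau\}$ at $1$ consisting of clopen $L$-subgyrogroups with $gyr[x,y](H_{\alpha})=H_{\alpha}$ for all $x,y\in G$.

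In the metrizable case I would fix a compatible metric $\rho$. Since $D$ is discrete, each $d\in D$ is isolated in $D$, so $r_{d}:=\tfrac{1}{2}\,\rho(d,D\setminus\{d\})>0$. I claim the open balls $B(d,r_{d})$ are pairwise disjoint. Indeed, if some $x$ lay in $B(d_{1},r_{d_{1}})\cap B(d_{2},r_{d_{2}})$ with $d_{1}\neq d_{2}$, then $\rho(d_{1},d_{2})\leq\rho(d_{1},x)+\rho(x,d_{2})<r_{d_{1}}+r_{d_{2}}$; but $d_{2}\in D\setminus\{d_{1}\}$ forces $r_{d_{1}}\leq\tfrac{1}{2}\rho(d_{1},d_{2})$ and symmetrically $r_{d_{2}}\leq\tfrac{1}{2}\rho(d_{1},d_{2})$, whence $r_{d_{1}}+r_{d_{2}}\leq\rho(d_{1},d_{2})$, a contradiction. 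Thus $\{B(d,r_{d}):d\in D\}$ separates the points of $D$.

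In the non-metrizable case I would pass to the clopen base supplied by the equivalence in Theorem~\ref{5}: by Theorem~\ref{5}(4) the family $\mathcal{B}$ of all left cosets of the $H_{\alpha}$'s is a base of clopen sets, and for $\alpha<\beta$ we have $H_{\beta}\subseteq H_{\alpha}$, so the cosets of $H_{\beta}$ refine those of $H_{\alpha}$. Consequently any two members of $\mathcal{B}$ are either disjoint or nested: if $A$ is a coset of $H_{\alpha}$ and $B$ a coset of $H_{\beta}$ with $\alpha\leq\beta$ and $A\cap B\neq\emptyset$, then $B$ lies in a unique coset of $H_{\alpha}$, which by disjointness of the partition must be $A$, giving $B\subseteq A$. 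Since $D$ is discrete and $\mathcal{B}$ is a base, for each $d\in D$ I can pick $W_{d}\in\mathcal{B}$ with $W_{d}\cap D=\{d\}$. If $d_{1}\neq d_{2}$ but $W_{d_{1}}\cap W_{d_{2}}\neq\emptyset$, then one contains the other, say $W_{d_{1}}\subseteq W_{d_{2}}$; then $d_{1}\in W_{d_{1}}\subseteq W_{d_{2}}$ yields $d_{1}\in W_{d_{2}}\cap D=\{d_{2}\}$, i.e. $d_{1}=d_{2}$, a contradiction. Hence the $W_{d}$ are automatically pairwise disjoint and separate the points of $D$.

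The only delicate point, and what I regard as the main obstacle, is the nested-or-disjoint property in the non-metrizable case, since in a gyrogroup one cannot manipulate cosets as freely as in a group. This is exactly where the $L$-subgyrogroup hypothesis is essential: by Theorem~\ref{ttttt} the left cosets of each $H_{\alpha}$ form a genuine disjoint partition of $G$, and the chain $H_{\beta}\subseteq H_{\alpha}$ for $\alpha<\beta$ then yields the refinement and hence the tree structure already recorded in Theorem~\ref{5}(4). Once this structural fact is in hand, the separation in both cases is immediate, and no appeal to a further property of $G$ (such as the hereditary paracompactness of Theorem~\ref{7}) is needed.
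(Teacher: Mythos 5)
Your proof is correct, and while its overall skeleton matches the paper's (split into metrizable/non-metrizable via Theorem~\ref{4}, then in the non-metrizable case pick for each point of $D$ a basic clopen coset meeting $D$ only in that point and show these are pairwise disjoint), the crucial disjointness step is justified by a genuinely different mechanism. The paper proves ``if two chosen cosets meet, then one point lies in the other's coset'' by an explicit gyrogroup computation: writing $x_{i}\oplus g_{i}=x_{j}\oplus g_{j}$ and expanding $x_{j}=(x_{j}\oplus g_{j})\oplus gyr[x_{j},g_{j}](\ominus g_{j})$, then absorbing everything into $x_{i}\oplus H_{\alpha(i)}$ using the left gyroassociative law, the invariance $gyr[x,y](H_{\alpha})=H_{\alpha}$, and closure of $H_{\alpha}$ under $\oplus$. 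You instead derive it structurally: since $H_{\beta}\subseteq H_{\alpha}$ gives the elementwise inclusion $b\oplus H_{\beta}\subseteq b\oplus H_{\alpha}$, and by Theorem~\ref{ttttt} the left cosets of each $L$-subgyrogroup form a disjoint partition, any two basic cosets are nested or disjoint, and nesting immediately contradicts the choice $W_{d}\cap D=\{d\}$. Your route is cleaner and avoids all gyration bookkeeping --- at this step it needs only the $L$-subgyrogroup partition property, not the full invariance $gyr[x,y](H)=H$ for all $x,y\in G$ (though that invariance is still consumed earlier, inside Theorem~\ref{4}, to produce the clopen subgyrogroup base). What the paper's computational approach buys is that the same expansion technique is reused almost verbatim in Lemma~\ref{19}, so it establishes a template; your approach instead leans on the refinement/tree structure already recorded in Theorem~\ref{5}(4). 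You also spell out the metrizable case (half-distance balls), which the paper dismisses as obvious; that is a minor but welcome completion.
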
	
\begin{proof}
	 If $G$ is metrizable, then the conclusion is clearly valid. If $G$ is non-metrizable, then let $D=\left \{ x_{i} :i\in I \right \}$. Since $G$ is a strongly topologically orderable gyrogroup, it follows from Theorem~\ref{4} that there exists a local base $\left \{ H_{\alpha }:\alpha < \tau \right \} $ at 1 of $G$ consisting of clopen $L$-subgyrogroups of $G$ such that $H_{\beta } \subseteq H_{\alpha } $ for $\alpha < \beta < \tau $ and $gyr[x,y](H_{\alpha})=H_{\alpha}$ for any $x,y \in G,\alpha<\tau$. For any $i\in I$, since $D$ is discrete, there exists $\alpha \left ( i \right ) < \tau $ such that $(x_{i} \oplus H_{\alpha \left ( i \right ) })\cap D=\left \{ x_{i} \right \}. $
		
	 Let us prove that the family $\{x_{i} \oplus H_{\alpha \left ( i \right ) }:i\in I\}$ is disjoint. Suppose not, then there exist distinct two elements $i, j\in I$ such that $(x_{i} \oplus H_{\alpha \left ( i \right )})\cap(x_{j} \oplus H_{\alpha \left ( j \right )})\neq \emptyset $. Without loss of generality, we may assume that $\alpha \left ( i \right )\leq \alpha \left ( j \right ) $, then $H_{\alpha \left ( j \right ) } \subseteq H_{\alpha \left ( i \right ) } $. Hence there exist $g_{i}\in H_{\alpha \left ( i \right ) } $, $g_{j}\in H_{\alpha \left ( j \right ) }$ such that $x_{i}\oplus g_{i} = x_{j}\oplus g_{j}$. Therefore,
		\begin{align*}
			x_{j}&= x_{j}\oplus (g_{j}\oplus(\ominus g_{j}))\\
			&=(x_{j}\oplus g_{j})\oplus gyr[x_{j}, g_{j}](\ominus g_{j})\\
			&\subseteq(x_{j}\oplus g_{j})\oplus gyr[x_{j}, g_{j}](H_{\alpha \left ( j \right ) })\\
			&\subseteq(x_{i}\oplus H_{\alpha \left ( i \right ) })\oplus H_{\alpha \left ( j \right ) }\\
			&=x_{i}\oplus (H_{\alpha \left ( i \right ) }\oplus H_{\alpha \left ( j \right ) })\\
			&\subseteq x_{i}\oplus (H_{\alpha \left ( i \right ) }\oplus H_{\alpha \left ( i \right ) })\\
			&\subseteq x_{i}\oplus H_{\alpha \left ( i \right ) }.
		\end{align*}		
This leads to a contradiction since $(x_{i} \oplus H_{\alpha \left ( i \right ) })\cap D=\left \{ x_{i} \right \}.$	Therefore, the family $\{x_{i} \oplus H_{\alpha \left ( i \right ) }:i\in I\}$ is disjoint, then $D$ can be separated by pairwise disjoint neighborhoods.
        \end{proof}
	
		\begin{lemma}\label{19}
Let $\tau$ be an infinite cardinal. Suppose that $G$ is a strongly topologically orderable gyrogroup, and suppose that $\{H_{\alpha}: \alpha<\tau\}$ is a base at the identity of $G$ consisting of clopen $L$-subgyrogroups satisfies the following conditions:

\smallskip
(1) $H_{\beta}\subset H_{\alpha}$ for any $\alpha<\beta<\tau$;

 \smallskip
 (2) $gyr[x, y](H_{\alpha})=H_{\alpha}$ for any $x, y\in G, \alpha<\tau$.

\smallskip
If $D$ is a subset of $G$ and $f:D\rightarrow \tau $ is a function such that the family $\gamma =\left \{ x\oplus H_{f\left ( x \right ) } :x\in D \right \} $ is disjoint, then $y\in G$ is an accumulation point of the family $\gamma$ if and only if $y$ is an accumulation point of $D$.
    	\end{lemma}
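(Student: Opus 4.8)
The plan is to test both accumulation conditions only on a canonical neighbourhood base at $y$ and to convert intersection statements into coset comparisons via the disjoint-partition property of Theorem~\ref{ttttt}. Since left translations are homeomorphisms of a topological gyrogroup and $\{H_\alpha:\alpha<\tau\}$ is a base at $1$, the family $\{y\oplus H_\alpha:\alpha<\tau\}$ is a neighbourhood base at $y$, so it suffices to work with basic neighbourhoods $V=y\oplus H_\alpha$. First I would record the one structural consequence of the hypothesis that $\gamma$ is disjoint: because $x=x\oplus 1\in x\oplus H_{f(x)}$, two distinct points of $D$ lie in two distinct (hence, by disjointness, disjoint) members of $\gamma$, so the assignment $x\mapsto x\oplus H_{f(x)}$ is injective and $D\cap(x\oplus H_{f(x)})=\{x\}$. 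Thus ``$V$ meets infinitely many members of $\gamma$'' and ``$V$ contains infinitely many base points'' become interchangeable once I control where the base points sit.

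The easy implication is ``$y$ accumulates $D$ $\Rightarrow$ $y$ accumulates $\gamma$'': if a neighbourhood $V$ contains infinitely many points of $D$, then, since each such $x$ lies in its own member $x\oplus H_{f(x)}$ and distinct points give distinct members, $V$ already meets infinitely many members of $\gamma$. No coset analysis is needed here.

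For the converse I would fix $V=y\oplus H_\alpha$ meeting infinitely many members and analyse a single member $x\oplus H_{f(x)}$ that meets $V$, splitting on how $f(x)$ compares with $\alpha$ and using condition (1) together with Theorem~\ref{ttttt}. If $f(x)\ge\alpha$ then $H_{f(x)}\subseteq H_\alpha$, so $x\oplus H_{f(x)}\subseteq x\oplus H_\alpha$; as $x\oplus H_\alpha$ and $y\oplus H_\alpha$ are cosets of $H_\alpha$ sharing a point, they coincide, whence $x\in x\oplus H_\alpha=y\oplus H_\alpha=V$. If instead $f(x)<\alpha$ then $H_\alpha\subseteq H_{f(x)}$, so $V=y\oplus H_\alpha\subseteq y\oplus H_{f(x)}$; now $y\oplus H_{f(x)}$ and $x\oplus H_{f(x)}$ are cosets sharing a point, hence equal, which forces $V\subseteq x\oplus H_{f(x)}$. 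The second case is self-defeating: a neighbourhood contained in a single member of the disjoint family $\gamma$ can meet no other member, contradicting that $V$ meets infinitely many. Consequently every member meeting $V$ falls under the first case, its base point lies in $V$, and, since these infinitely many members are pairwise disjoint, their base points are infinitely many distinct points of $V\cap D$; thus $y$ accumulates $D$.

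The main obstacle, and the only genuinely gyrogroup-theoretic step, is this converse case analysis: one must pass back and forth between cosets of $H_\alpha$ and of $H_{f(x)}$ and invoke Theorem~\ref{ttttt} in the correct nesting direction, the decisive observation being that $f(x)<\alpha$ would trap all of $V$ inside one member and is therefore incompatible with $V$ meeting infinitely many members. Everything else reduces to the fact that left translations are homeomorphisms and to the elementary coset bookkeeping above.
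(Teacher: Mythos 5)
Your proof is correct, and it shares the paper's skeleton --- the easy direction is handled the same way, and the converse again hinges on comparing $\alpha$ with $f(x)$ for a member $x\oplus H_{f(x)}$ meeting the basic neighbourhood $V=y\oplus H_{\alpha}$ --- but the decisive case is eliminated by a different mechanism. The paper first notes that an accumulation point of $\gamma$ lies outside $\bigcup\gamma$, and then kills the case $\alpha\ge f(x)$ by an explicit gyrogroup computation (left gyroassociativity together with condition (2), $gyr[x,y](H_{\alpha})=H_{\alpha}$), which forces $y\in x\oplus H_{f(x)}\subseteq\bigcup\gamma$, a contradiction; the surviving case $\alpha<f(x)$ then yields $x\in y\oplus H_{\alpha}$ by exactly the coset argument of your ``good'' case. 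You instead apply Theorem~\ref{ttttt} twice: to $H_{\alpha}$, to place the base point $x$ inside $V$ when $f(x)\ge\alpha$, and to $H_{f(x)}$, to show that $f(x)<\alpha$ traps all of $V$ inside the single member $x\oplus H_{f(x)}$, which is incompatible with $V$ meeting infinitely many members of a disjoint family. Your route buys two things: it is purely coset-combinatorial, with no gyrogroup calculation at all, and it never invokes condition (2) --- only the $L$-subgyrogroup property that Theorem~\ref{ttttt} requires --- so it establishes the lemma under formally weaker hypotheses. What it costs is a commitment to reading ``accumulation point of the family $\gamma$'' as ``every neighbourhood meets infinitely many members'', whereas the paper works from ``$y\in\overline{\bigcup\gamma}$ and $y\notin\bigcup\gamma$''; since the members of $\gamma$ are pairwise disjoint and clopen, the two readings coincide, so this is a presentational choice rather than a gap. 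One small point to tighten: your opening claim that distinct points of $D$ determine distinct members of $\gamma$ presupposes that $\gamma$ is disjoint as a family \emph{indexed by} $D$ (a priori two distinct points could name the same coset); this indexed reading is the one the paper intends, and the one needed when the lemma is applied in Lemma~\ref{20}.
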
	
	    \begin{proof}
	    	The necessity is obvious. Suppose that $y\in G$ is an accumulation point of $\gamma$, then $y\notin \cup \gamma$. We conclude that the following claim holds:

\smallskip	    	
{\bf \mbox{Claim:}} If $x \in D, \alpha < \tau\ and\ \left ( y\oplus H_{\alpha }  \right )\cap\left ( x\oplus H_{f\left (  x\right )  }  \right ) \neq \emptyset,\ then\ \alpha < f\left ( x \right ).$

\smallskip	 	
Assume the contrary that, for some $ x \in D$ and $\left ( y\oplus H_{\alpha }  \right )\cap\left ( x\oplus H_{f\left (  x\right )  }  \right ) \neq \emptyset$ such that $\alpha \geq f\left ( x \right ) $, which shows that $H_{\alpha}\subset H_{f(x)}$. Then there exist $g_{\alpha}\in H_{\alpha } $, $g_{f\left ( x \right )}\in H_{f\left ( x \right ) }$ such that $y\oplus g_{\alpha} = x\oplus g_{f\left ( x \right )}$. Therefore, we have
	    	\begin{align*}
	    		y&= y\oplus (g_{\alpha}\oplus(\ominus g_{\alpha}))\\
	    		&=(y\oplus g_{\alpha})\oplus gyr[y, g_{\alpha}](\ominus g_{\alpha})\\
	    		&\subseteq(x\oplus g_{f\left ( x \right )})\oplus gyr[y, g_{\alpha}](H_{\alpha})\\
	    		&\subseteq(x\oplus H_{f\left ( x \right )})\oplus H_{\alpha}\\
	    		&=x\oplus (H_{f\left ( x \right ) }\oplus H_{\alpha })\\
	    		&\subseteq x\oplus (H_{f\left ( x \right ) }\oplus H_{f\left ( x \right ) })\\
	    		&= x\oplus H_{f\left ( x \right ) }.
	    	\end{align*}
This is a contradiction with $y\notin \bigcup \gamma$.
	    	
Let $U$ be an arbitrary neighborhood of $y$ in $G$. Then there exists $\alpha < \tau$ such that $y\oplus H_{\alpha }\subseteq U $. Since  $y\in\overline{ \cup \gamma}$, there exists $x\in D$ such that $\left ( y\oplus H_{\alpha }  \right )\cap\left ( x\oplus H_{f\left (  x\right )  }  \right ) \neq \emptyset$, hence $\alpha<f(x)$ by Claim. Then
since $H_{f(x)}\subset H_{\alpha}$ and $H_{\alpha}$ is an $L$-subgyrogroup, it follows that $x\in y\oplus H_{\alpha }$. Therefore, $x\in \left ( y\oplus H_{\alpha } \right )\cap D\subseteq U\cap D\neq \emptyset $, so $y\in \overline{ D}$. This proof has been completed.
	    \end{proof}
	
		\begin{lemma}\label{20}
		Suppose that  $G$ be a strongly topologically orderable gyrogroup and $D$ be a discrete and closed subset of $G$ (resp. has at most one accumulation point). If $L$ is a subgyrogroup of $G$ such that $D\subseteq \overline {L}$, then there exists a subset $F\subseteq L$ satisfy the following conditions:

\smallskip
(i) $F$ is discrete in itself;

\smallskip
(ii) $F$ is  closed in $L$ (resp. has at most one accumulation point in $L$);

\smallskip
(iii) $D\subseteq \overline {F}$ in $G$.
	    \end{lemma}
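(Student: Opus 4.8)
The plan is to transport $D$ into $L$ coset-by-coset, using the clopen $L$-subgyrogroup base supplied by Theorem~\ref{4} to keep the copies separated. If $G$ is metrizable the argument below goes through verbatim with sequences in place of nets, so assume $G$ is not metrizable and fix a totally ordered local base $\{H_\alpha:\alpha<\tau\}$ at $1$ consisting of clopen $L$-subgyrogroups with $H_\beta\subseteq H_\alpha$ for $\alpha<\beta<\tau$ and $gyr[x,y](H_\alpha)=H_\alpha$ for all $x,y\in G$. Since $D$ is discrete, the proof of Lemma~\ref{18} produces a function $f\colon D\to\tau$ for which the family $\gamma=\{x\oplus H_{f(x)}:x\in D\}$ is pairwise disjoint and $(x\oplus H_{f(x)})\cap D=\{x\}$ for every $x\in D$. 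Each member of $\gamma$ is clopen, and disjointness means that $x\oplus H_{f(x)}$ is a clopen neighbourhood of each of its points meeting no other member of $\gamma$.

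Next I would build $F$ by choosing, inside each coset, an appropriate piece of $L$. Fix $x\in D$. If $x\in L$, set $F_x=\{x\}$. If $x\notin L$, then $x\in\overline{L}\setminus L$ is an accumulation point of $L$, hence is non-isolated and is a monotone limit (from above or from below in the order) of points of $L$; choosing such a strictly monotone net that is eventually contained in the neighbourhood $x\oplus H_{f(x)}$ yields a set $F_x\subseteq L\cap(x\oplus H_{f(x)})$ which is discrete in itself, has $x$ as its unique accumulation point, and satisfies $x\in\overline{F_x}$. Put $F=\bigcup_{x\in D}F_x\subseteq L$. This splitting according to whether $x$ lies in $L$ is exactly what will force $F$ to be closed in $L$.

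Then I would verify the three conditions. For (iii), each $x\in D$ lies in $\overline{F}$: directly if $x\in L$ (as $x\in F_x\subseteq F$), and as the limit of $F_x$ otherwise. For (i), a point $p\in F_x$ has the clopen neighbourhood $x\oplus H_{f(x)}$, which meets $F$ only in $F_x$; since $F_x$ is discrete in itself, $p$ is isolated in $F$. For (ii) I would analyse an accumulation point $y\in G$ of $F$ in two cases. If $y\in x_0\oplus H_{f(x_0)}$ for some $x_0\in D$, then because this coset is a clopen neighbourhood of $y$ meeting $F$ only in $F_{x_0}$, the point $y$ must be an accumulation point of $F_{x_0}$, forcing $y=x_0$ and $x_0\notin L$; in particular $y\notin L$. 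If instead $y\notin\bigcup\gamma$, then $y\in\overline{\bigcup\gamma}\setminus\bigcup\gamma$, so $y$ is an accumulation point of $\gamma$ and Lemma~\ref{19} shows $y$ is an accumulation point of $D$. In the case where $D$ is closed and discrete it has no accumulation points, so the second case is vacuous and the first yields $y\notin L$; hence $F$ has no accumulation point in $L$ and is closed in $L$. In the case where $D$ has at most one accumulation point $d_0$, the only accumulation point of $F$ that can lie in $L$ is $d_0$, so $F$ has at most one accumulation point in $L$.

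I expect the main obstacle to be the construction of $F_x$ for $x\notin L$: one must produce a subset of $L$ inside the coset that is simultaneously discrete in itself and accumulates to $x$ with no other accumulation point. In the metrizable case this is a routine convergent sequence, but in the non-metrizable case it requires using the order to extract a strictly monotone net of the correct (regular) length converging to $x$, and checking that such a net is discrete in itself and has $x$ as its sole limit. Everything else is bookkeeping built on the disjointness of $\gamma$ and on Lemma~\ref{19}.
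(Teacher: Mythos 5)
Your proof follows the same skeleton as the paper's: metrizable versus non-metrizable case, Lemma~\ref{18} to get the disjoint family $\gamma=\{x\oplus H_{f(x)}:x\in D\}$, the splitting $F_x=\{x\}$ for $x\in L$ versus a set accumulating at $x$ for $x\notin L$, and Lemma~\ref{19} for the final case analysis. The gap is exactly the step you yourself flag as ``the main obstacle,'' the construction of $F_x$ for $x\notin L$, and the device you propose for it --- a strictly monotone net in the order converging to $x$ --- does not work. In a non-metrizable orderable space, a strictly monotone transfinite sequence $\{z_\beta:\beta<\kappa\}\subseteq L$ converging to $x$ need not be discrete in itself and need not have $x$ as its only accumulation point: at a limit ordinal $\beta<\kappa$ the initial segment $\{z_{\beta'}:\beta'<\beta\}$ may converge downward to its infimum, a point $y$ strictly above $x$ which may well lie in $L$. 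Then $y$ is a second accumulation point of $F_x$, so $F$ fails to be closed in $L$ (or fails discreteness if $y\in F$), and your argument for (ii) --- ``$y$ must be an accumulation point of $F_{x_0}$, forcing $y=x_0$'' --- presupposes precisely the uniqueness that breaks down. Monotonicity gives no control whatsoever at limit stages, so the extraction you describe cannot by itself deliver the three properties you need of $F_x$.

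The paper closes this gap with a cardinality argument rather than an order argument. By Theorem~\ref{5} the base at $1$ may be taken to be a decreasingly well-ordered family $\{H_\alpha:\alpha<\tau\}$ with $\tau$ an uncountable \emph{regular} cardinal, and then every subset of $G$ of cardinality less than $\tau$ is closed and discrete in $G$ (for $|A|<\tau$ and $g\in G$, the ordinals witnessing $a\notin g\oplus H_{\alpha_a}$ for $a\in A\setminus\{g\}$ are bounded below $\tau$ by regularity). Working inside $\overline{L}$ with $V_\alpha=\overline{L}\cap H_\alpha$, the paper picks an \emph{arbitrary} point $z_{x,\alpha}\in\bigl(L\cap(x\oplus V_\alpha)\bigr)\setminus\{1\}$ for \emph{every} $\alpha$ with $f(x)\le\alpha<\tau$, and sets $F_x=\{z_{x,\alpha}:f(x)\le\alpha<\tau\}$. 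Now any accumulation point of $F_x$ must be an accumulation point of every tail $\{z_{x,\alpha}:\alpha\ge\beta\}$, because each initial segment has size less than $\tau$ and hence is closed and discrete; since the tail lies in the closed set $x\oplus V_\beta$ and $\bigcap_{\beta}(x\oplus V_\beta)=\{x\}$, the point $x$ is the unique accumulation point of $F_x$, with no monotone selection needed. Your construction becomes correct only if the net is indexed by $\tau$ itself and this small-sets-are-closed-discrete fact is invoked --- at which point the monotonicity you emphasize is superfluous, and the proof collapses back to the paper's.
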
	
	    \begin{proof}
	    If $G$ is metrizable, the conclusion is immediate. Suppose $G$ is non-metrizable topologically orderable gyrogroup, it follows from Theorem~\ref{5} that there is a decreasingly well-ordered base $\left \{ H_{\alpha }:\alpha < \tau \right \} $ at 1 for some uncountable regular cardinal $\tau$. It is easy to see that every subset of $G$ of cardinality less than $\tau$ is closed and discrete in $G$. For each $\alpha < \tau$, let $V_{\alpha }=\overline {L} \cap H_{\alpha }.$ Hence the family $\{V_{\alpha }:\alpha < \tau\}$ is a decreasing base at 1 of $\overline {L}$. Then $\overline {L}$ is also a topologically orderable gyrogroup. Since discrete subset $D\subseteq \overline {L}$, by Lemma~\ref{18}, there exists a function $f:D\rightarrow \tau $ such that the family $\gamma =\left \{ x\oplus V_{f\left ( x \right ) } :x\in D \right \} $ is pairwise disjoint. In $G$, the set $D$ can have only one accumulation point, then we may assume the accumulation point is the identity element 1, thus $1\not\in D$. Then by Lemma~\ref{19}, the identity element 1 is the unique accumulation point of $\gamma$ in $G$. Moreover, if $D$ is closed in $G$, then the family $\gamma$ will be discrete $G$.

For any $x\in D$, define a closed discrete subset $F_{x} $ of $L\cap \left (x\oplus V_{f\left ( x \right ) } \right ) $ as follows:

\smallskip
 (1) If $x\in L$, $F_{x}=\left \{ x \right \} $.

\smallskip
 (2) If $x\notin L$, then for each $f(x)\leq \alpha\leq\tau$, pick $z_{x, \alpha}\in (L\cap (x\oplus V_{\alpha}))\setminus\{1\}$; then put $F_{x}=\left \{z_{x, \alpha}:f\left ( x \right ) \leq \alpha < \tau   \right \}$.
	
Now set $F=\bigcup _{x\in D} F_{x}$. Clearly, $1\not\in F$. Finally, it suffices to claim that $F$ is discrete in itself and has at most one accumulation point in $L$.

Indeed, fix any $x\in D$. According to the definition of $F_{x}$, if $x\in L$, then $F_{x}=\{x\}$ is a discrete subset. Otherwise, $x$ is the limit point of $\left \{z_{x, \alpha}:f\left ( x \right ) \leq \alpha < \tau   \right \}\subset L$ by our choice of points, then the point $x$ is the unique accumulation point of $F_{x}$ in $G$ and $F_{x}$ is closed in $L$ since $x\not\in L$. Since the family $\gamma$ has at most one accumulation point $1$ in $G$ and $F_{x}\subseteq x\oplus V_{f\left ( x \right ) }$ for each $x\in D$, it follow that $F\cup\{1\}$ is closed in $L$ and the set $F$ is discrete in $L\setminus\{1\}$. Hence $F$ is discrete in itself and has at most one accumulation point in $L$. Clearly, if $D$ is closed in $G$, then $F$ is closed in $L$.
	    \end{proof}

By Lemma~\ref{20}, we can prove the following main theorem.
	
\begin{theorem}\label{21}
Let  $G$ be a strongly topologically orderable gyrogroup and $H$ be a dense subgyrogroup of $G$. If $G$ has a (closed) suitable set, then $H$ also has a (closed) suitable set.
\end{theorem}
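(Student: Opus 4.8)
The plan is to manufacture a (closed) suitable set for $H$ directly out of a given (closed) suitable set $S$ of $G$, using Lemma~\ref{20} to push $S$ into the dense subgyrogroup $H$ while controlling discreteness and closure. First I would record the elementary observations about $S$: being a suitable set, $S$ is discrete and $S\cup\{1\}$ is closed in $G$, so every accumulation point of $S$ lies in $(S\cup\{1\})\setminus S\subseteq\{1\}$; thus $S$ has at most one accumulation point in $G$ (namely $1$), and in the closed case $S$ is itself closed and discrete. Since $H$ is dense in $G$, we have $S\subseteq G=\overline{H}$, so Lemma~\ref{20} applies with $D=S$ and $L=H$, and its proof yields a set $F\subseteq H$ that is discrete in itself, with $F$ closed in $H$ (resp. $F\cup\{1\}$ closed in $H$), and with $S\subseteq\overline{F}$ in $G$.

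Next I would check that this $F$ is the required (closed) suitable set of $H$. Conditions (i) and (ii) of Lemma~\ref{20} give exactly the discreteness of $F$ and the closedness of $F$ (resp. of $F\cup\{1\}$) in $H$, so the only remaining point is that the subgyrogroup $\langle F\rangle$ generated by $F$ is dense in $H$. Because $F\subseteq H$ and $H$ is a subgyrogroup, $\langle F\rangle\subseteq H$; hence it is enough to prove $\langle F\rangle$ is dense in the whole of $G$, since then its closure in $H$ is $\overline{\langle F\rangle}\cap H=G\cap H=H$.

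The density hinges on the fact that the closure of a subgyrogroup is again a subgyrogroup. This follows from continuity of the gyrogroup operations: continuity of $\ominus$ gives $\ominus\overline{\langle F\rangle}\subseteq\overline{\ominus\langle F\rangle}=\overline{\langle F\rangle}$, while continuity of $\oplus$ gives $\overline{\langle F\rangle}\oplus\overline{\langle F\rangle}\subseteq\overline{\langle F\rangle\oplus\langle F\rangle}\subseteq\overline{\langle F\rangle}$. Thus $\overline{\langle F\rangle}$ is a closed subgyrogroup of $G$ which, by $S\subseteq\overline{F}\subseteq\overline{\langle F\rangle}$, contains $S$ and hence contains $\langle S\rangle$. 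As $S$ is a suitable set, $\langle S\rangle$ is dense, so $\overline{\langle F\rangle}\supseteq\overline{\langle S\rangle}=G$, giving $\overline{\langle F\rangle}=G$. Therefore $\langle F\rangle$ is dense in $G$, hence in $H$, and $F$ is a (closed) suitable set for $H$.

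I expect the main obstacle to have been absorbed already into Lemma~\ref{20}, whose construction is what simultaneously keeps $F$ inside $H$, preserves discreteness and the closure/accumulation behaviour, and guarantees $S\subseteq\overline{F}$. Within the present argument the only genuinely new ingredient is the density transfer, and the one fact that must be handled carefully is that topological closure preserves the subgyrogroup structure; everything else is routine bookkeeping between the two parallel (closed and non-closed) cases.
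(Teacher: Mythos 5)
Your proof is correct and follows essentially the same route as the paper: both apply Lemma~\ref{20} with $D=S$ and $L=H$ and take the resulting set $F\subseteq H$ as the (closed) suitable set for $H$. The only difference is that you spell out the density verification (that $\overline{\langle F\rangle}$ is a closed subgyrogroup of $G$ containing $S$, hence equal to $G$, so $\langle F\rangle$ is dense in $H$), a step the paper's proof leaves implicit.
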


\begin{proof}
Let $D$ be a (closed) suitable set of $G$ such that $1\not\in S$. Then $S$ is discrete in itself and $D\subset \overline{H}=G$. By Lemma~\ref{20}, there exists a subset $F\subset H$ such that $F$ is discrete in itself, $F$ has at most one accumulation point in $H$ (resp. is closed in $H$) and $D\subseteq \overline {F}$ in $G$. Therefore, $F$ is a (closed) suitable set for $H$.
\end{proof}
     	
 The following questions are still unknown for us.

	    \begin{question}\label{22}
	  Suppose that $G$ is a strongly topologically orderable gyrogroup with a suitable set and $H$ is a subgyrogroup of $G$, does $H$ have a suitable set?
	    \end{question}

	    \begin{question}\label{23}
	  Suppose that $G$ is a strongly topologically orderable gyrogroup with a suitable set and $H$ is a non-closed subgyrogroup of $G$, does $H$ have a closed suitable set?
	    \end{question}

{\bf Acknowledgements}
We wish to express our sincere thank
the reviewers for careful reading preliminary version of this paper and providing many valuable suggestions.

\end{document}